\documentclass[11pt,final]{article}
\usepackage[textwidth=5.5in,textheight=9.5in,centering]{geometry}%
\usepackage{titlesec,titletoc}
\usepackage{etex}
\usepackage{mathtools}
\usepackage{pdfpages}
\usepackage{url}
\usepackage{amsmath}
\usepackage{amsxtra,amscd}
\usepackage{amsfonts}
\usepackage{amssymb}
\usepackage{graphicx}
\usepackage[amsmath,hyperref,thmmarks]{ntheorem}
\usepackage{natbib}
\usepackage{verbatim}
\usepackage{mathptmx}
\usepackage{enumitem}
\usepackage[notcite,notref]{showkeys}
\usepackage{url}
\usepackage[colorlinks=true,bookmarksnumbered=true,pdfpagemode=None,extension=pdf]%
{hyperref}%
\setcounter{MaxMatrixCols}{30}
\providecommand{\U}[1]{\protect\rule{.1in}{.1in}}
\theoremnumbering{arabic}
\theoremheaderfont{\scshape}
\RequirePackage{latexsym}
\theorembodyfont{\slshape}
\theoremseparator{.}
\newtheorem{X}{X}[section]

\newtheorem{corollary}[X]{Corollary}

\newtheorem{lemma}[X]{Lemma}
\newtheorem*{*lemma}{Lemma}
\newtheorem{proposition}[X]{Proposition}
\newtheorem*{*proposition}{Proposition}

\newtheorem{theorem}[X]{Theorem}
\theorembodyfont{\upshape}
\newtheorem{aside}[X]{Aside}
\newtheorem{definition}[X]{Definition}

\newtheorem*{*definition}{Definition}

\newtheorem*{*example}{Example}

\newtheorem{plain}[X]{}

\theorembodyfont{\footnotesize}
\newtheorem{nt}[X]{Notes}
\theorembodyfont{\normalsize}
\theoremstyle{nonumberplain}
\theoremheaderfont{\sc}
\theorembodyfont{\normalfont}
\theoremsymbol{\ensuremath{_\Box}}
\RequirePackage{amssymb}
\newtheorem{proof}{Proof}
\qedsymbol{\ensuremath{_\Box}}
\theoremclass{LaTeX}
\makeindex


\newcommand{\da}{\tikz[baseline=-0.6ex]{\draw[densely dashed,->,>=angle 90] (0,0) -- (3ex,0); }}

\pagestyle{headings}

\contentsmargin{2.0em}
\dottedcontents{section}[2.3em]{}{1.8em}{1pc}
\dottedcontents{subsection}[6.0em]{}{1.8em}{1pc}
\setcounter{tocdepth}{1}
\titleformat*{\subsection}{\large\scshape}
\titleformat*{\subsubsection}{\slshape}

\usepackage{titlesec}                                                                            
\titleformat*{\section}{\LARGE\bfseries}
\titleformat*{\subsection}{\Large\itshape}
\titleformat*{\subsubsection}{\scshape}
\titleformat*{\paragraph}{\itshape}

\setcounter{secnumdepth}{1}

\usepackage{enumitem}
\setitemize[1]{label=$\diamond$\hspace{0.07in}}
\setlist{nolistsep}

\usepackage[nottoc]{tocbibind}


\usepackage{array}

\usepackage{microtype}

\usepackage{tikz}
\usetikzlibrary{matrix,arrows,positioning,decorations.pathmorphing}
\usepackage{tikz-cd}
 
\usepackage{wrapfig}

\usepackage{etex}

\usepackage{url}
\usepackage{verbatim}

\usepackage[notcite,notref]{showkeys}

\usepackage{mathtools}

\usepackage[overload]{textcase}

\newcommand{\eb}[1]{{\itshape\bfseries#1}}
\renewcommand{\emph}{\eb}


\usepackage{natbib}
\let\cite\citealt

\hypersetup{linkcolor=blue,anchorcolor=blue,citecolor=blue,filecolor=blue,urlcolor=blue}
\hypersetup{pdftitle={A Proof of the Barsotti-Chevalley Theorem on Algebraic Groups},pdfauthor={James S. Milne}}



\newcommand{\bcomment}{}
\newcommand{\bfootnotesize}{\begin{footnotesize}}\newcommand\efootnotesize{\end{footnotesize}}
\newcommand{\bquote}{\begin{quote}}\newcommand\equote{\end{quote}}
\newcommand{\bsmall}{\begin{small}}\newcommand\esmall{\end{small}}
\newcommand{\btable}{\begin{table}}\newcommand{\etable}{\end{table}}

\newcommand{\edocument}{
\begin{document}

\title{A Proof of the Barsotti-Chevalley Theorem on Algebraic Groups}
\author{James S. Milne}
\date{December 7, 2013}
\maketitle

\begin{abstract}
A fundamental theorem of Barsotti and Chevalley states that every smooth
connected algebraic group over a perfect field is an extension of an abelian
variety by a smooth affine algebraic group. In 1956 Rosenlicht gave a short
proof of the theorem. We explain his proof in the language of modern algebraic geometry.

\end{abstract}

\tableoc

\mdskip The theorem in question is the following:

\begin{quote}
Every connected group variety $G$ over a perfect field contains a unique
connected affine normal subgroup variety $N$ such that $G/N$ is an abelian variety.
\end{quote}

\noindent According to Rosenlicht (1956),\nocite{rosenlicht1956} this theorem
\textquotedblleft was announced by Chevalley in 1953, together with a proof
whose basic idea was to consider the natural homomorphism from a connected
algebraic group to its Albanese variety and then apply the basic properties of
Albanese and Picard varieties.\textquotedblright\ Chevalley didn't publish his
proof until 1960. In the meantime, Barsotti had independently published a
proof (\cite{barsotti1955i, barsotti1955}).\footnote{\nocite{borel2001}Borel
(2001, p.156) writes: \textquotedblleft In 1953, as pointed out [by
Rosenlicht], Chevalley showed that any irreducible algebraic group $G$
contains a biggest normal linear algebraic subgroup, which is the kernel of a
morphism of $G$ onto an abelian variety\ldots. He did not publish it at that
time, only later\ldots. The argument is in principle the one alluded to by
Rosenlicht. It uses a theory of the Albanese and of linear systems of divisors
in a non-projective situation. It may be that Chevalley waited until more
foundational material was available. He does indeed refer to later papers for
it.\textquotedblright\ In his review of Chevalley's paper, Barsotti points out
that Chevalley uses a result from an expos\'{e} of Grothendieck in the 1956/58
S\'{e}minaire Chevalley, which Barsotti traces back to his own paper.}

In the paper just cited, Rosenlicht gives a proof of the theorem that is both
simpler and more elementary than that of Chevalley.\footnote{Rosenlicht (1956)
says that Barsotti's papers \textquotedblleft seem to follow a similar
method\textquotedblright. Barsotti's papers are very difficult to read, which
perhaps helps to explain why they are so often ignored.} It deserves to be
better known. In this expository article, we explain it in the language of
modern algebraic geometry.

\subsubsection{Terminology}

We work over a fixed $k$. By an algebraic scheme, we mean a scheme of finite
type over $k$. An algebraic group scheme is a group in the category of
algebraic schemes over $k$. We abbreviate \textquotedblleft algebraic group
scheme\textquotedblright\ to \textquotedblleft algebraic
group\textquotedblright. An algebraic variety over $k$ is a
geometrically-reduced separated algebraic scheme over $k$. By a
\textquotedblleft group variety\textquotedblright\ we mean a group in the
category of \textit{connected} algebraic varieties. Thus the group varieties
are exactly the smooth connected algebraic groups. A \textquotedblleft
point\textquotedblright\ of an algebraic scheme means \textquotedblleft closed
point\textquotedblright. Finally, \textquotedblleft largest\textquotedblright%
\ means \textquotedblleft unique maximal\textquotedblright.

\section{Affine algebraic subgroups}

It is convenient to regard an (affine) algebraic group over $k$ as a functor
from $k$-algebras to groups whose underlying functor to sets is representable
by an (affine) algebraic scheme. An algebraic subgroup $H$ of an algebraic
group $G$ is a group subfunctor representable by an algebraic scheme. Our
finiteness assumption implies that $H$ is a closed subscheme of $G$ (hence
affine if $G$ is affine). An algebraic subgroup $H$ is normal if $H(R)$ is
normal in $G(R)$ for all $k$-algebras $R$.

A sequence of algebraic groups%
\begin{equation}
1\rightarrow N\rightarrow G\rightarrow Q\rightarrow1 \label{eq1}%
\end{equation}
is exact if and only if the sequence%
\[
1\rightarrow N(R)\rightarrow G(R)\rightarrow Q(R)
\]
is exact for all $k$-algebras $R$ and every element of $Q(R)$ lifts to
$G(R^{\prime})$ for some faithfully flat $R$-algebra $R^{\prime}$
(equivalently, $G\rightarrow Q$ is faithfully flat). Then $N$ is a normal
algebraic subgroup, and every normal algebraic subgroup arises in this way
from an essentially unique exact sequence (and we usually denote $Q$ by $G/N$).

The Noether isomorphism theorems hold for algebraic groups (and affine
algebraic groups) over a field. For example, if $N$ and $H$ are algebraic
subgroups of an algebraic group $G$ with $N$ normal, then there is an exact
sequence%
\[
1\rightarrow N\cap H\rightarrow N\rtimes_{\theta}H\rightarrow Q\rightarrow1.
\]
where $N\rtimes_{\theta}H$ is the semidirect product of $N$ and $H$ with
respect to the obvious action of $H$ on $N$. The quotient $Q$ can be
identified with the algebraic subgroup $NH$ of $G$ whose $R$-points are the
elements of $G(R)$ that lie in $N(R^{\prime})H(R^{\prime})$ for some
faithfully flat $R$-algebra $R^{\prime}$. The natural map of functors
$H\rightarrow NH/N$ determines an isomorphism%
\[
H/N\cap H\rightarrow NH/N
\]
of algebraic groups. For all of this, see my notes AGS (and the references
therein), or SGA 3, VI$_{A}$

\begin{lemma}
\label{b1}Let%
\[
1\rightarrow N\rightarrow G\rightarrow Q\rightarrow1
\]
be an exact sequence of algebraic groups.

\begin{enumerate}
\item If $N$ and $Q$ are affine (resp. smooth, resp. connected), then $G$ is
affine (resp. smooth, resp. connected).

\item If $G$ is affine (resp. smooth, resp. connected), then so also is $Q$.
\end{enumerate}
\end{lemma}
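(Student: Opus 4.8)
The plan is to route everything through the faithfully flat quotient morphism $q\colon G\to Q$ supplied by the definition of exactness. Its fibres are torsors under $N$ (the fibre over the identity is $N$ itself, the others being fppf-twists of it), so in (a) each of the three properties will be read off from the corresponding property of $N$ on the fibres together with that of $Q$ on the base, while in (b) each will be pushed from $G$ down to $Q$ along the surjection $q$. I will use freely that over a field an algebraic group is smooth if and only if it is geometrically reduced, and that a connected algebraic group is geometrically connected (it carries the rational point $e$).

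For (a) the three cases run as follows. Connectedness: $q$ is flat and of finite type, hence open, and it is surjective; since $N$ is connected each fibre is geometrically connected, so a decomposition of $G$ into two disjoint nonempty opens would, each fibre lying in one piece, be $q$-saturated and map to a disjoint open covering of $Q$, contradicting the connectedness of $Q$. Smoothness: since $N$ is smooth over $k$, the torsor $q$ is a smooth morphism (after an fppf base change $Q'\to Q$ it becomes the projection $N\times Q'\to Q'$, and smoothness is fppf-local on the target), and composing the smooth $q$ with the smooth structure morphism $Q\to\Spec k$ shows $G$ is smooth over $k$. Affineness: likewise $q$ is an affine morphism, since after the same base change it becomes $N\times Q'\to Q'$ with $N$ affine and the property of being an affine morphism is fppf-local on the target; an affine morphism over the affine scheme $Q$ then has affine total space $G$.

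For (b), connectedness is immediate, $Q$ being the continuous image of the connected space $G$. For smoothness I would note that $q$ faithfully flat makes $\mathcal O_Q\to q_*\mathcal O_G$ injective, so $\mathcal O_Q$ is locally a subring of a reduced ring and hence reduced; applying this after base change to $\bar k$ shows $Q$ is geometrically reduced, i.e.\ smooth. The affine case is the one genuine theorem: the quotient of an affine algebraic group by a normal algebraic subgroup is again affine. Here $N$, closed in the affine $G$, is affine, and I would invoke the quotient theory cited above, whose essential content is the construction of a finite-dimensional representation of $G$ with kernel exactly $N$, so that $Q=G/N$ embeds as a closed subgroup of some $\GL(W)$.

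I expect this last point to be the main obstacle. The other ingredients needing real input—the fppf-descent of affineness and smoothness of a morphism in (a), and the equivalence ``smooth $\iff$ geometrically reduced'' in (b)—are standard and can be cited, but producing a representation of an affine $G$ whose kernel is a prescribed normal $N$ rests on Chevalley's theorem: one first realizes $N$ as the stabilizer of a line $L$ in some representation $(V,\rho)$, then uses normality to see that every translate $\rho(g)L$ is $N$-stable, and finally manufactures from these data a representation on which $N$, and only $N$, acts trivially. That construction, rather than the topological or flatness arguments, is where the work lies.
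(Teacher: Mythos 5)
Your proposal is correct and follows essentially the same route as the paper: everything is read off the faithfully flat quotient morphism $G\to Q$, with affineness and smoothness of $G$ obtained by descent along the trivializing base change $G\times_{Q}G\simeq N\times G$, smoothness of $Q$ from the injectivity of $\mathcal{O}_{Q}\to q_{*}\mathcal{O}_{G}$, and the affine case of (b) deferred to the general theory of quotients of affine algebraic groups. The only cosmetic difference is that you handle connectedness topologically (open surjection with geometrically connected fibres in (a), continuous image in (b)) where the paper uses the universal property of $\pi_{0}$; both arguments are fine.
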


\begin{proof}
(a) Assume $N$ and $Q$ are affine. The morphism $G\rightarrow Q$ is faithfully
flat with affine fibres. Now $G\times_{Q}G\simeq G\times N$, and so the
morphism $G\times_{Q}G\rightarrow G$ is affine. By faithfully flat descent,
the morphism $G\rightarrow Q$ is affine. As $Q$ is affine, so also is $G$.

If $N$ and $Q$ are smooth, then $G\rightarrow Q$ has smooth fibres of constant
dimension, and so it is smooth. As $Q$ is smooth, so is $G$.

Let $\pi_{0}(G)$ be the group of connected components of $G$; it is an
\'{e}tale algebraic group, and the natural map $G\rightarrow\pi_{0}(G)$ is
universal among homomorphisms from $G$ to \'{e}tale algebraic groups. If $N$
is connected, then $G\twoheadrightarrow\pi_{0}(G)$ factors through $Q$, and
hence through $\pi_{0}(Q)$, which is trivial if $Q$ is also connected.

(b) That quotients of affine algebraic groups by normal algebraic subgroups
are affine is part of the general theory, discussed above.

To show that $Q$ is smooth, it suffices to show that it is geometrically
reduced but, because the map $G\rightarrow Q$ is faithfully flat, a nonzero
nilpotent local section of $\mathcal{O}_{Q_{\bar{k}}}$ will give a nonzero
nilpotent local section of $\mathcal{O}{}_{G_{\bar{k}}}$.

The faithfully flat homomorphism $G\rightarrow Q\rightarrow\pi_{0}(Q)$ factors
through $\pi_{0}(G)$, and so $\pi_{0}(Q)$ is trivial if $\pi_{0}(G)$ is.
\end{proof}

In particular, an extension of affine group varieties is again an affine group
variety, and a quotient of a group variety by a normal algebraic subgroup is a
group variety.

\begin{lemma}
\label{b2}Let $H$ and $N$ be algebraic subgroups of an algebraic group $G$
with $N$ normal. If $H$ and $N$ are affine (resp. connected, resp. smooth),
then $HN$ is affine (resp. connected, resp. smooth).
\end{lemma}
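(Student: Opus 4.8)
The plan is to reduce everything to the two parts of Lemma~\ref{b1} by inserting $HN$ into a short exact sequence whose outer terms I already control. Since $N$ is normal in $G$, it is a fortiori normal in the subgroup $HN\subseteq G$, so there is an exact sequence
\[
1 \to N \to HN \to HN/N \to 1.
\]
If I can show that both $N$ and $HN/N$ carry the property in question (affine, resp.\ connected, resp.\ smooth), then Lemma~\ref{b1}(a) delivers that property for $HN$. As $N$ has the property by hypothesis, the entire problem collapses to understanding the quotient $HN/N$.

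This is exactly where the Noether isomorphism theorem recalled just before Lemma~\ref{b1} enters. Because $N$ is normal in $G$, the intersection $N\cap H$ is a normal algebraic subgroup of $H$, and the natural map $H\to HN/N$ induces an isomorphism
\[
H/(N\cap H) \xrightarrow{\;\sim\;} HN/N .
\]
Thus it suffices to establish the property for $H/(N\cap H)$. But this is precisely a quotient of $H$ by a normal algebraic subgroup, so if $H$ is affine (resp.\ connected, resp.\ smooth), then so is $H/(N\cap H)$ by Lemma~\ref{b1}(b); note that this step needs only the property of $H$, not of $N\cap H$. Transporting along the displayed isomorphism, $HN/N$ inherits the property.

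Assembling the two steps: in the exact sequence $1 \to N \to HN \to HN/N \to 1$ both end terms share whichever property $H$ and $N$ are assumed to have, so Lemma~\ref{b1}(a) yields that property for $HN$. The same three-line argument settles the affine, connected, and smooth cases uniformly, since Lemma~\ref{b1} treats all three in parallel.

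I do not anticipate a genuine obstacle here, as the content is packaged entirely in the isomorphism $H/(N\cap H)\cong HN/N$ and in Lemma~\ref{b1}. The one point that deserves a moment's care is verifying that $1 \to N \to HN \to HN/N \to 1$ is a legitimate exact sequence of algebraic groups, that is, that $N$ really is normal in $HN$ and that the quotient exists; this holds because $N$ is normal in $G\supseteq HN$.
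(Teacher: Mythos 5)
Your argument is correct and is essentially identical to the paper's proof: both use the exact sequence $1\to N\to HN\to HN/N\to 1$ together with the isomorphism $H/(H\cap N)\simeq HN/N$, applying Lemma~\ref{b1}(b) to transfer the property from $H$ to the quotient and Lemma~\ref{b1}(a) to conclude for $HN$.
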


\begin{proof}
Consider the diagram%
\[
\begin{tikzcd}
1\arrow{r}&N\arrow{r}&HN\arrow{r}&HN/N\arrow{r}&1\\
&&&H/H\cap N.\arrow{u}[swap]{\simeq}
\end{tikzcd}
\]
If $H$ is affine (resp. connected, resp. smooth), then so also is the quotient
$H/H\cap N$ (by \ref{b1}b); hence $HN/N$ is affine (resp. connected, resp.
smooth), and it follows from (\ref{b1}a) that the same is true of $HN$.
\end{proof}

\begin{lemma}
\label{b3}Every algebraic group contains a largest smooth connected affine
normal algebraic subgroup (i.e., a largest affine normal subgroup variety).
\end{lemma}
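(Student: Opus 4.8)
The plan is to exhibit the desired subgroup as the unique element of maximal dimension in a suitable family, using that the family is closed under the multiplication of subgroups. Let $\mathcal{S}$ denote the collection of all smooth connected affine normal algebraic subgroups of $G$. It is nonempty, since the trivial subgroup belongs to it. The first step is to observe that $\mathcal{S}$ is closed under the operation $(H_{1},H_{2})\mapsto H_{1}H_{2}$: the product $H_{1}H_{2}$ is again smooth, connected, and affine by Lemma \ref{b2}, and it is normal because conjugation by any $g\in G(R^{\prime})$ carries $H_{1}(R^{\prime})H_{2}(R^{\prime})$ into itself (each $H_{i}$ being normal).

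Next I would use finiteness to extract a maximal object. Since $G$ is of finite type, the dimensions of the members of $\mathcal{S}$ are bounded above by $\dim G$, so I may choose $N\in\mathcal{S}$ of maximal dimension. The claim is that $N$ contains every $H\in\mathcal{S}$. Indeed, $NH\in\mathcal{S}$ and $N\subseteq NH$, whence $\dim NH\geq\dim N$; maximality forces $\dim NH=\dim N$. Now $N$ and $NH$ are connected group varieties, hence geometrically integral (a smooth connected scheme with a rational point is geometrically irreducible), and $N$ is a closed subvariety of $NH$. A closed integral subscheme of an integral scheme of the same dimension must be the whole scheme, so $N=NH\supseteq H$. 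Thus $N$ contains every member of $\mathcal{S}$; it is therefore the unique maximal element, i.e., the largest affine normal subgroup variety.

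I expect the only delicate point to be the passage from the equality of dimensions to the equality $N=NH$: this relies on the irreducibility of the group varieties involved, which in turn uses smoothness together with the existence of the identity section to pass to the algebraic closure. Everything else---nonemptiness, closure of $\mathcal{S}$ under products, and the boundedness of dimensions---is immediate from Lemma \ref{b2} and the finiteness hypothesis. Note that no perfectness of $k$ is needed for this lemma.
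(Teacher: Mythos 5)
Your proof is correct and follows essentially the same route as the paper: both arguments rest on Lemma \ref{b2} (closure of the family under products) together with a maximality argument, the paper taking two maximal members $H,N$ and concluding $H=HN=N$, while you take a member of maximal dimension and deduce it absorbs every other via the dimension/integrality argument. The extra details you supply (normality of the product, geometric integrality of smooth connected group varieties) are accurate and merely make explicit what the paper leaves implicit.
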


\begin{proof}
Let $H$ and $N$ be maximal smooth connected affine normal algebraic subgroups
of an algebraic group. Then $HN$ also has these properties by (\ref{b2}a), and
so $H=HN=N$.
\end{proof}

\begin{definition}
\label{b4}A \emph{pseudo-abelian} variety is a group variety such that every
affine normal subgroup variety is trivial.
\end{definition}

\begin{proposition}
\label{b5}Every group variety $G$ can be written as an extension%
\[
1\rightarrow N\rightarrow G\rightarrow Q\rightarrow1
\]
of a pseudo-abelian variety $Q$ by a normal affine subgroup variety $N$ in
exactly one way.
\end{proposition}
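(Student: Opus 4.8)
The plan is to take $N$ to be the largest affine normal subgroup variety furnished by Lemma~\ref{b3}, set $Q=G/N$, and then check both that this $Q$ is pseudo-abelian and that the decomposition is forced. Since $G$ is a group variety and $N$ is normal, the quotient $Q=G/N$ is again a group variety (a quotient of a group variety by a normal algebraic subgroup, as noted after Lemma~\ref{b1}), so $1\rightarrow N\rightarrow G\rightarrow Q\rightarrow1$ is already an extension of the required shape.

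To see that $Q$ is pseudo-abelian, let $\bar{H}$ be an affine normal subgroup variety of $Q$ and let $\pi\colon G\rightarrow Q$ be the quotient map. I would form the preimage $H=\pi^{-1}(\bar{H})$, which sits in an exact sequence $1\rightarrow N\rightarrow H\rightarrow\bar{H}\rightarrow1$, the kernel being $N=\ker\pi\subseteq H$. Here $N$ is smooth, connected, and affine, and so is $\bar{H}$; hence Lemma~\ref{b1}(a) makes $H$ smooth, connected, and affine. Being the preimage of a normal subgroup, $H$ is normal in $G$. Thus $H$ is an affine normal subgroup variety of $G$ containing $N$, and the maximality of $N$ (Lemma~\ref{b3}) forces $H=N$. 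Therefore $\bar{H}=H/N$ is trivial, which is exactly the assertion that $Q$ is pseudo-abelian (Definition~\ref{b4}).

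For uniqueness, suppose $1\rightarrow N'\rightarrow G\rightarrow Q'\rightarrow1$ is any extension with $N'$ an affine normal subgroup variety and $Q'$ pseudo-abelian. Since $N'$ is an affine normal subgroup variety, the maximality of $N$ gives $N'\subseteq N$. Conversely, the image of $N$ under $G\rightarrow Q'$ is $NN'/N'\cong N/(N\cap N')$, which is smooth, connected, and affine by Lemma~\ref{b1}(b) and which is normal in $Q'$, being the image of the normal subgroup $N$ under a surjection. As $Q'$ is pseudo-abelian, this image is trivial, i.e.\ $N\subseteq N'$. Combining the two inclusions yields $N=N'$, so every such extension coincides with the one constructed above.

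The argument is essentially a bookkeeping exercise built on Lemmas~\ref{b1}--\ref{b3}; the only point requiring care is the verification that the various preimages and images are again \emph{smooth and connected} subgroup varieties, so that the notions of ``largest affine normal subgroup variety'' and ``pseudo-abelian'' genuinely apply to them. This is precisely what the extension part (a) and the quotient part (b) of Lemma~\ref{b1} supply, so I anticipate no serious obstacle beyond keeping track of these hypotheses.
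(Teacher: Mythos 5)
Your proposal is correct and follows essentially the same route as the paper: take $N$ to be the largest affine normal subgroup variety from Lemma~\ref{b3}, pull back a putative affine normal subgroup variety of $Q$ and use Lemma~\ref{b1}(a) plus maximality to show $Q$ is pseudo-abelian, and derive uniqueness from the maximality characterization. Your uniqueness step is merely a more explicit two-inclusion version of the paper's one-line remark, using the Noether isomorphism and Lemma~\ref{b1}(b) to see that the image of $N$ in $Q'$ is an affine normal subgroup variety, hence trivial.
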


\begin{proof}
Let $N$ be the largest normal affine subgroup variety of $G$ (see \ref{b3}),
and let $Q=G/N$. If $Q$ is not pseudo-abelian, then it contains a nontrivial
normal affine subgroup variety $H$. Let $H^{\prime}$ be the inverse image of
$H$ in $G$. From the exact sequence%
\[
1\rightarrow N\rightarrow H^{\prime}\rightarrow H\rightarrow1
\]
and (\ref{b1}) we see that $H^{\prime}$ is an affine subgroup variety of $G$.
Because $H$ is normal in $Q$, $H^{\prime}$ is normal in $G$, and so this
contradicts the definition of $N$. Hence $Q$ is a pseudo-abelian variety.

In order for $G/N$ to be pseudo-abelian, $N$ must be maximal among the normal
affine subgroup varieties of $G$; therefore it is unique (\ref{b3}).
\end{proof}

\begin{nt}
\label{b5a}Most of the generalities on algebraic group reviewed above are
proved in \S \S 2,3 of Rosenlicht's paper except that, as he worked in the
category of smooth algebraic groups, the isomorphism theorems hold only up to
a purely inseparable isogeny.
\end{nt}

\section{Rosenlicht's decomposition theorem}

Recall that an abelian variety is defined to be a complete group variety.
Abelian varieties are pseudo-abelian (every affine subgroup variety is closed;
hence complete; hence trivial). A rational map $\phi\colon X\da Y$ of
algebraic varieties is an equivalence class of pairs $(U,\phi_{U})$ with $U$ a
dense open subset of $X$ and $\phi_{U}$ a morphism $U\rightarrow Y$; in the
equivalence class, there is a pair with $U$ largest (and $U$ is called
\textquotedblleft the open subvariety on which $\phi$ is
defined.\textquotedblright) We shall need to use the following results, which
can be found, for example, in \cite{milne1986ab}.

\begin{plain}
\label{b6a} Every rational map from a normal variety to a complete variety is
defined on an open set whose complement has codimension $\geq2$ (ibid. 3.2).
\end{plain}

\begin{plain}
\label{b6b}A rational map from a smooth variety to a group variety is defined
on an open set whose complement is either empty or has pure codimension $1$
(ibid. 3.3).
\end{plain}

\begin{plain}
\label{b6c}Every rational map from a smooth variety $V$ to an abelian variety
$A$ is defined on the whole of $V$ (combine \ref{b6a} and \ref{b6b}).
\end{plain}

\begin{plain}
\label{b6d}Every morphism from a group variety to an abelian variety is the
composite of a homomorphism with a translation (ibid. 3.6).
\end{plain}

\begin{plain}
\label{b6e}Every abelian variety is commutative (apply (\ref{b6d}) to the map
$x\mapsto x^{-1}$).
\end{plain}

By an \emph{abelian subvariety} of an algebraic group we mean a complete
subgroup variety.

\begin{proposition}
\label{b6h}Let $G\times X\rightarrow X$ be an algebraic group acting
faithfully on an algebraic space $X$. If there is a fixed point $P$, then $G$
is affine.
\end{proposition}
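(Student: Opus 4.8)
The plan is to manufacture a faithful finite-dimensional linear representation of $G$: such a representation realises $G$ as a closed subgroup of some $\GL(V)$, and a closed subscheme of an affine scheme is affine. Since affineness descends along the faithfully flat extension $k\to\bar{k}$, I may assume $k$ algebraically closed, so that $P$ is a rational point. Because $P$ is fixed, $G$ acts on the local ring $\mathcal{O}_{X,P}$ preserving its maximal ideal $\mathfrak{m}_{P}$, and hence acts $k$-linearly on each finite-dimensional quotient $V_{n}:=\mathcal{O}_{X,P}/\mathfrak{m}_{P}^{n}$ (finite-dimensional because $X$ is of finite type over $k$). This construction is functorial in the $k$-algebra $R$, so it yields homomorphisms of algebraic groups $\rho_{n}\colon G\to\GL(V_{n})$, compatibly in $n$.

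Next I would track the kernels $K_{n}=\Ker(\rho_{n})$, which form a descending chain $K_{1}\supseteq K_{2}\supseteq\cdots$ of closed subgroup schemes of $G$. Algebraic groups over a field satisfy the descending chain condition on closed subgroup schemes, so the chain stabilises: there is an $N_{0}$ with $K_{n}=K_{N_{0}}=:N$ for all $n\ge N_{0}$, and $N=\bigcap_{n}K_{n}$. By construction $N$ is exactly the subgroup scheme whose points act trivially on the completion $\widehat{\mathcal{O}}_{X,P}$, that is, which act trivially to all orders at $P$.

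The heart of the matter, and the step I expect to be the main obstacle, is to prove that $N$ is trivial; this is the only point at which faithfulness is used. For a point $g\in N(k)$ I would consider its fixed-point scheme $X^{g}$, the equalizer of $g$ and $\id$: it is closed because $X$ is separated, and it contains $P$ together with the full infinitesimal neighbourhood of $P$, since $g$ acts as the identity on $\widehat{\mathcal{O}}_{X,P}$. Hence the ideal cutting out $X^{g}$ near $P$ lies in $\bigcap_{m}\mathfrak{m}_{P}^{m}=0$ by the Krull intersection theorem, so $X^{g}$ contains an open neighbourhood of $P$; when $X$ is irreducible this forces $X^{g}=X$. A parallel computation with the infinitesimal action shows that each vector field coming from $\Lie N$ vanishes to infinite order at $P$, hence identically, so $\Lie N=0$. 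Together these say that $N$ acts trivially on $X$, both on $k$-points and infinitesimally, so $N$ lies in the kernel of the $G$-action, which is trivial by faithfulness; thus $N=1$. The delicate bookkeeping here is the propagation from the formal neighbourhood of $P$ to all of $X$: it goes through cleanly when $X$ is irreducible, which is the situation in the intended application. (Since $G$ is connected it at least stabilises the component through $P$, and some connectedness hypothesis is genuinely needed, as an abelian variety acts faithfully on the disjoint union of a fixed point and a translation torsor while fixing that point.)

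Finally, with $N=\Ker(\rho_{N_{0}})$ trivial, the homomorphism $\rho_{N_{0}}\colon G\to\GL(V_{N_{0}})$ is a monomorphism of algebraic groups, and so factors as an isomorphism of $G$ onto a closed subgroup of the affine group $\GL(V_{N_{0}})$. A closed subscheme of an affine scheme being affine, $G$ is affine, as required.
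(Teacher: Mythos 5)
Your proposal follows essentially the same route as the paper: both pass to the finite-dimensional representations of $G$ on $\mathcal{O}_{X,P}/\mathfrak{m}_{P}^{n}$, stabilise the descending chain of kernels by noetherianness, use the Krull intersection theorem to propagate triviality of the stable kernel's action from the formal neighbourhood of $P$ to an open neighbourhood and thence (by irreducibility/connectedness) to all of $X$, and conclude from faithfulness that some $\rho_{n}$ is injective, embedding $G$ as a closed subgroup of a $\GL_{V}$. Your extra care with the scheme-theoretic kernel (treating $\bar{k}$-points and $\Lie N$ separately) and your explicit flagging of the irreducibility of $X$ needed for the spreading-out step are refinements of, not departures from, the paper's argument.
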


\begin{proof}
Because $G$ fixes $P$, it acts on the local ring $\mathcal{O}{}_{P}$ at $P$.
The formation of $\mathcal{O}{}_{P}/\mathfrak{m}{}_{P}^{n+1}$ commutes with
extension of the base, and so the action of $G$ defines a homomorphism
$G(R)\rightarrow\Aut(R\otimes_{k}\mathcal{O}{}_{P}/\mathfrak{m}{}_{P}^{n+1})$
for all $k$-algebras $R$. This is natural in $R$, and so arises from a
(unique) homomorphism $\rho_{n}\colon G\rightarrow\GL_{\mathcal{O}{}%
_{e}/\mathfrak{m}{}_{P}^{n+1}}$ of algebraic groups. Let $Z_{n}=\Ker(\rho
_{n})$, and let $Z$ be the intersection of the descending sequence of
algebraic subgroups $\cdots\supset Z_{n}\supset Z_{n+1}\supset\cdots$. Because
$G$ is noetherian, there exists an $n_{0}$ such that $Z=Z_{n}$ for all $n\geq
n_{0}$.

Let $\mathcal{I}{}$ be the ideal in $\mathcal{O}{}_{G}$ corresponding to the
closed algebraic subgroup $Z$ in $G$. Then $\mathcal{I}\mathcal{O}{}%
_{P}\subset\mathfrak{m}{}_{P}^{n}$ for all $n\geq n_{0}$, and so
$\mathcal{I}{}{}\mathcal{O}{}_{e}\subset\bigcap\nolimits_{n}\mathfrak{m}{}%
_{e}^{n}=0$ (Krull intersection theorem). It follows that $Z$ contains an open
neighbourhood of $e$, and therefore is open in $G$. As it is closed and $G$ is
connected, it equals $G$. Therefore the representation of $G$ on
$\mathcal{O}{}_{e}/\mathfrak{m}{}_{e}^{n+1}$ is faithful if $n\geq n_{0}$,
which shows that $G$ is affine.
\end{proof}

\begin{corollary}
\label{b6}Let $G$ be a connected algebraic group, and let $\mathcal{O}{}_{e}$
be the local ring at the neutral element $e$. The action of $G$ on itself by
conjugation defines a representation of $G$ on $\mathcal{O}{}_{e}%
/\mathfrak{m}{}_{e}^{n+1}$. For all sufficiently large $n$, the kernel of this
representation is the centre of $G$.
\end{corollary}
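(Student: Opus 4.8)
The plan is to rerun the argument of \ref{b6h}, now with $X=G$ and $P=e$, and then to identify the resulting kernel. Conjugation $\inn(g)\colon x\mapsto gxg^{-1}$ fixes $e$, so $G$ acts on $\mathcal{O}_e$ and on each quotient $\mathcal{O}_e/\mathfrak{m}_e^{n+1}$, yielding homomorphisms $\rho_n\colon G\to\GL_{\mathcal{O}_e/\mathfrak{m}_e^{n+1}}$. Exactly as in \ref{b6h}, the kernels $Z_n=\Ker(\rho_n)$ form a descending chain which, since $G$ is noetherian, stabilizes to a closed subgroup $Z=\bigcap_n Z_n=Z_{n_0}$; and the Krull intersection argument used there shows that $Z$ acts trivially not merely on the quotients but on the whole local ring $\mathcal{O}_e$ (recall $\mathcal{O}_e$ injects into $\widehat{\mathcal{O}}_e$ because $G$ is integral). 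It then remains to show that $Z$ coincides with the centre $Z(G)$ of $G$.

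One inclusion is immediate: a central element induces the trivial inner automorphism, hence acts trivially on $\mathcal{O}_e$ and on every $\mathcal{O}_e/\mathfrak{m}_e^{n+1}$, so $Z(G)$ lies in every $Z_n$ and therefore in $Z$.

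The content is the reverse inclusion $Z\subseteq Z(G)$, and promoting infinitesimal triviality to honest triviality is the step I expect to be the main obstacle. For $g\in Z$ the two morphisms $\inn(g)$ and $\id_G$ fix $e$ and induce the same map on $\mathcal{O}_e$; since a morphism of finite-type $k$-schemes is determined near a point by its effect on the local ring there, $\inn(g)$ and $\id_G$ must agree on some open neighbourhood $U$ of $e$. Because $G$ is an integral variety, $U$ is dense, and two morphisms from a reduced scheme to the separated scheme $G$ that agree on a dense open subset agree everywhere; hence $\inn(g)=\id_G$ and $g\in Z(G)$. Together the two inclusions give $\Ker(\rho_n)=Z=Z(G)$ for all $n\ge n_0$.

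The genuine difficulty is that this last paragraph is cleanest for geometric points, whereas $\Ker(\rho_n)$ and $Z(G)$ are closed subgroup schemes that can be non-reduced in positive characteristic, so the equality $Z=Z(G)$ is wanted as schemes, i.e.\ on $R$-points for every $k$-algebra $R$. I would secure this by running the infinitesimal-to-global step in families: show that the conjugation morphism $Z\times G\to G$ agrees with the projection after restriction to the formal neighbourhood of $Z\times\{e\}$, and then extend over the integral $G$. For a single point $g$ this amounts to the clean observation that the centralizer $C_G(g)$ is a closed subscheme whose formal completion at $e$ exhausts $\widehat{\mathcal{O}}_e$, so $\dim_e C_G(g)=\dim G$ and integrality of $G$ forces $C_G(g)=G$; making this argument uniform in the base is the one place where real care is required.
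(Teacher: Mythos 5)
Your argument is correct and is essentially the paper's: the paper's one-line proof (``apply the proof of Proposition \ref{b6h} to the faithful conjugation action of $G/Z$ on $G$'') unwinds to exactly your two inclusions, with the stable kernel $Z=Z_{n_0}$ acting trivially on every infinitesimal neighbourhood of $e$ and hence on all of $G$, i.e.\ centrally. One caution: the corollary assumes only that $G$ is a connected algebraic group, which is irreducible but need not be reduced, so rather than invoking ``reduced source agreeing on a dense open'' you should close the argument (for $R$-points as well as $k$-points) the way Proposition \ref{b6h} does --- the locus where conjugation by a point of $Z$ agrees with the identity is a closed subgroup scheme whose ideal at $e$ lies in $\bigcap_n\mathfrak{m}_e^{n}=0$, so it contains an open neighbourhood of $e$ and therefore equals the connected group $G$ --- which also resolves the $R$-point worry you raise in your final paragraph.
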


\begin{proof}
Apply the above proof to the faithful action $G/Z\times G\rightarrow G$.
\end{proof}

\begin{proposition}
\label{b7}Let $G$ be a connected algebraic group. Every abelian subvariety $A$
of $G$ is contained in the centre of $G$; in particular, it is a normal
subgroup variety.
\end{proposition}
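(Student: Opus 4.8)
The plan is to exploit Corollary~\ref{b6}, which is placed immediately before the statement precisely for this purpose. By that corollary we may fix an integer $n$ large enough that the kernel of the conjugation representation
\[
\rho\colon G\longrightarrow \GL_{\mathcal{O}_{e}/\mathfrak{m}_{e}^{n+1}}
\]
is exactly the centre $Z$ of $G$. It therefore suffices to prove that $A\subseteq Z$, and for this it is enough to check that $A$ lies in $\Ker(\rho)$; centrality is then the whole content, since a central subgroup is automatically normal.

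First I would restrict $\rho$ to the subgroup $A$, obtaining a homomorphism of algebraic groups $\rho|_{A}\colon A\to \GL_{\mathcal{O}_{e}/\mathfrak{m}_{e}^{n+1}}$ from the abelian variety $A$ to an affine group variety. The crux is the rigidity-type fact that such a homomorphism must be trivial. Writing $K=\Ker(\rho|_{A})$ and factoring through the image, the first isomorphism theorem identifies $\rho(A)$ with $A/K$, which is a group variety by Lemma~\ref{b1}, hence smooth and connected. On the one hand $\rho(A)$ is complete, being the image of the complete variety $A$ in the separated scheme $\GL_{\mathcal{O}_{e}/\mathfrak{m}_{e}^{n+1}}$; on the other hand it is affine, being a closed subgroup of the affine group $\GL_{\mathcal{O}_{e}/\mathfrak{m}_{e}^{n+1}}$. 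A connected group variety that is simultaneously complete and affine must be trivial: a complete affine variety is finite, and a connected finite group variety (being reduced, connected, and possessing the rational point $e$) is a single point. Hence $A/K$ is trivial, so $K=A$, which is to say $A\subseteq\Ker(\rho)=Z$.

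Finally, since $A(R)\subseteq Z(R)$ for every $k$-algebra $R$, the subgroup $A$ commutes with all of $G$ and is in particular normal, yielding the ``in particular'' clause. The main obstacle is the triviality of $\rho|_{A}$, and everything hinges on the collision between the completeness of $A$ and the affineness of $\GL_{\mathcal{O}_{e}/\mathfrak{m}_{e}^{n+1}}$ forcing the image down to a point; the smoothness of the image supplied by Lemma~\ref{b1} is what rules out an infinitesimal image in positive characteristic. Once Corollary~\ref{b6} is granted, none of the rational-map or Albanese machinery is needed for this proposition.
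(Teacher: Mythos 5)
Your proposal is correct and is essentially the paper's own argument: the paper's one-line proof ("Because $A$ is complete, it is contained in the kernel of $\rho_{n}$") is exactly the completeness-versus-affineness collision you spell out, combined with Corollary~\ref{b6} identifying that kernel with the centre. You have merely filled in the standard details (image is closed, complete, affine, smooth, connected, hence trivial) that the paper leaves implicit.
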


\begin{proof}
Because $A$ is complete, it is contained in the kernel of $\rho_{n}\colon
G\rightarrow\GL_{\mathcal{O}{}_{e}/\mathfrak{m}{}^{n+1}}$ for all $n.$
\end{proof}

\begin{lemma}
\label{b8a}Let $G$ be a commutative group variety over $k$, and let $V\times
G\rightarrow V$ be a $G$-torsor. There exists a morphism $\phi\colon
V\rightarrow G$ and an integer $n$ such that $\phi(v+g)=\phi(v)+ng$ for all
$v\in V$, $g\in G$.
\end{lemma}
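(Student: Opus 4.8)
The plan is to build $\phi$ by fixing a closed point of $V$ and summing the ``translation trivialisations'' over its conjugates. First I would extract the subtraction morphism from the torsor structure. Since $V\times G\to V$ is a $G$-torsor, the map $V\times G\to V\times V$, $(w,g)\mapsto(w+g,w)$, is an isomorphism; composing its inverse with the projection to $G$ gives a morphism $\delta\colon V\times V\to G$ characterised by $w+\delta(v,w)=v$, i.e.\ $\delta(v,w)=v-w$. From this characterisation one reads off the two identities
\[
\delta(v+g,w)=\delta(v,w)+g,\qquad \delta(v,w+g)=\delta(v,w)-g,
\]
valid functorially in all $k$-algebras. These are the only inputs from the torsor structure that the argument uses.

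Next I would choose a closed point $P\in V$ and set $n=[\kappa(P):k]$; such a point exists because $V$ is nonempty and of finite type over $k$. The finite subscheme $\Spec\kappa(P)\hookrightarrow V$ has length $n$ and is flat over $k$, so it determines a $k$-rational point $[P]$ of the symmetric power $\Sym^{n}V$ (which exists because $V$, being a smooth torsor under the quasi-projective group $G$, is itself quasi-projective). Over $\bar k$ this point is represented by the degree-$n$ fibre of $V_{\bar k}\to V$ above $P$, a $\Gal(\bar k/k)$-stable $0$-cycle. I would then define $\phi(v)=\sum_{w}\delta(v,w)$, the sum in the commutative group $G$ over the points $w$ of this fibre.

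To see that $\phi$ is a genuine morphism over $k$, and not merely a $\Gal$-equivariant assignment on $\bar k$-points, I would observe that
\[
V\times V^{n}\longrightarrow G,\qquad (v;w_{1},\dots,w_{n})\longmapsto \sum_{i=1}^{n}\delta(v,w_{i}),
\]
is a morphism symmetric in the $w_{i}$, hence factors through $V\times\Sym^{n}V\to G$; restricting this to $V\times\{[P]\}$ yields $\phi\colon V\to G$ over $k$. The desired relation is then immediate from the first identity for $\delta$:
\[
\phi(v+g)=\sum_{w}\delta(v+g,w)=\sum_{w}\bigl(\delta(v,w)+g\bigr)=\phi(v)+ng .
\]

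The one step I expect to be the main obstacle is precisely the passage from the multiset of geometric points to an honest $k$-morphism when $\kappa(P)/k$ is inseparable, where ``summing over the fibre'' must be interpreted as a norm along the finite flat scheme $\Spec\kappa(P)$ rather than a sum over distinct points. The clean way to package this uniformly in all characteristics is to use the trivialisation $t\colon V_{K}\xrightarrow{\ \sim\ }G_{K}$, $v\mapsto\delta(v,p)$, of the now-split torsor over $K=\kappa(P)$ (where $p\in V(K)$ is the tautological point), and to set $\phi=N_{K/k}\circ\Res_{K/k}(t)\circ u_{V}$, with $u_{V}\colon V\to\Res_{K/k}V_{K}$ the unit of the adjunction and $N_{K/k}\colon\Res_{K/k}G_{K}\to G$ the norm of the commutative group $G$. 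The equivariance of $u_{V}$ and of $t$, together with the standard identity $N_{K/k}\circ u_{G}=[n]$, then delivers $\phi(v+g)=\phi(v)+ng$ directly, with $n=[K:k]$, and subsumes the inseparable case.
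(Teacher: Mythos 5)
Your construction agrees with the paper's on the decisive formula --- both define $\phi(v)=\sum_{i}(v-P_{i})$ over the conjugates of a chosen closed point and verify $\phi(v+g)=\phi(v)+ng$ from $G$-equivariance of subtraction --- but you diverge at the one point that matters: how the base point is chosen. The paper observes that, $V$ being a variety (geometrically reduced; indeed smooth, since it is a torsor under the smooth group $G$), it has a closed point $P$ whose residue field is a finite \emph{separable} extension of $k$. The fibre over $P$ then consists of $n$ distinct $k^{\mathrm{al}}$-points permuted by $\Gal(\tilde K/k)$, the sum $v\mapsto\sum_i\delta(v,P_i)$ is visibly Galois-equivariant, and ordinary Galois descent of morphisms finishes the proof; no symmetric powers, Hilbert--Chow points, or norms along finite flat extensions are needed. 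You instead take an arbitrary closed point and then invest most of your effort in the inseparable case, which you correctly flag as the obstacle but which simply does not arise if the point is chosen well. Your fallback is workable --- the factorization of the symmetric morphism $V\times V^{n}\to G$ through $V\times\Sym^{n}V$, the $k$-point of $\Sym^{n}V$ attached to the finite flat subscheme $\Spec\kappa(P)$, and the norm $N_{K/k}\colon\Res_{K/k}G_{K}\to G$ with $N_{K/k}\circ u_{G}=[n]$ are all true --- but each of these is itself a nontrivial construction requiring proof (in particular the existence of the norm/transfer for a commutative group scheme along a finite \emph{inseparable} extension, and the compatibility of the cycle class of a non-reduced fibre with the factored morphism), so as written the argument outsources the hard part to machinery heavier than the lemma itself. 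What your route buys is genuine extra generality (it would apply when no separable point is available, e.g.\ for torsors under non-smooth commutative group schemes); what the paper's route buys is that, in the situation at hand, the smoothness of $V$ makes the entire difficulty evaporate. I would recommend either adding the one-line observation that a separable closed point exists and deleting the last two paragraphs, or else supplying proofs (or precise references) for the three ingredients listed above.
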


\begin{proof}
Suppose first that $V(k)$ contains a point $P$. Then there is a unique
$G$-equivariant isomorphism%
\[
\phi\colon V\rightarrow G
\]
sending $P$ to the neutral element of $G$, namely, the map sending a point $v$
of $V$ to the unique point $(v-P)$ of $G$ such that $P+(v-P)=v$. In this case
we can take $n=1$.

In general, because $V$ is an algebraic variety, there exists a $P\in V$ whose
residue field $K\overset{\textup{{\tiny def}}}{=}\kappa(P)$ is a finite
\textit{separable} extension of $k$ (of degree $n$, say). Let $P_{1}%
,\ldots,P_{n}$ be the $k^{\mathrm{al}}$-points of $V$ lying over $P$, and let
$\tilde{K}$ denote the Galois closure (over $k$) of $K$ in $k^{\mathrm{al}}$.
Then the $P_{i}$ lie in $V(\tilde{K})$. Let $\Gamma=\Gal(\tilde{K}/k)$.

We have a morphism%
\[
v\mapsto\sum\nolimits_{i=1}^{n}(v-P_{i})\colon V_{\tilde{K}}\rightarrow
G_{\tilde{K}}%
\]
defined over $\tilde{K}$. It is $\Gamma$-equivariant, and so arises from a
morphism $\phi\colon V\rightarrow G$. For $g\in G$,%
\[
\phi(v+g)=\sum\nolimits_{i=1}^{n}(v+g-P_{i})=\sum\nolimits_{i=1}^{n}%
(v-P_{i})+g=\phi(v)+ng.
\]

\end{proof}

The next theorem says that every abelian subvariety of an algebraic group has
an almost-complement. It is a key ingredient in Rosenlicht's proof of the
Barsotti-Chevalley theorem.

\begin{theorem}
[Rosenlicht decomposition theorem]\label{b8}Let $A$ be an abelian subvariety
of a group variety $G$. There exists a normal algebraic subgroup $N$ of $G$
such that the map%
\begin{equation}
(a,n)\mapsto an\colon A\times N\rightarrow G \label{eq2}%
\end{equation}
is a faithfully flat homomorphism with finite kernel. When $k$ is perfect, $N$
can be chosen to be smooth.
\end{theorem}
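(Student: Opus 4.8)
The plan is to build the almost-complement $N$ as the kernel of a homomorphism $G \to A$ that restricts to an isogeny (in fact, multiplication by an integer) on $A$ itself. Since $A$ is central in $G$ by \ref{b7}, the quotient $G \to G/A$ makes $G$ into an $A$-torsor over $G/A$; the key is to produce a morphism $\phi\colon G \to A$ behaving like an equivariant trivialization of this torsor, and for this Lemma \ref{b8a} is precisely the tool. So first I would form the quotient map $\pi\colon G \to G/A =: V$ and observe that, because $A$ is central, $G \to V$ is an $A$-torsor in the sense required by \ref{b8a}; here $A$ plays the role of the commutative group variety $G$ of that lemma (abelian varieties are commutative by \ref{b6e}), and $V$ the role of the torsor.

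Next I would apply Lemma \ref{b8a} to obtain a morphism $\phi_0\colon V \to A$ and an integer $n$ with $\phi_0(v+a) = \phi_0(v) + na$ for the torsor action. Composing with $\pi$ gives a morphism $\psi = \phi_0 \circ \pi \colon G \to A$. By \ref{b6d} every morphism from a group variety to an abelian variety is a homomorphism composed with a translation, so after translating I may assume $\psi$ is a homomorphism; and the torsor-equivariance of $\phi_0$ translates into the statement that $\psi|_A$ is multiplication by $n$ on $A$, which is an isogeny (it has finite kernel, the $n$-torsion $A[n]$). I would then set $N = \ker(\psi)$. Since $\psi$ is a homomorphism, $N$ is a normal algebraic subgroup of $G$.

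It remains to check that $(a,n)\mapsto an\colon A\times N \to G$ is faithfully flat with finite kernel. The kernel of this multiplication map is $\{(a,a^{-1}) : a \in A \cap N\}$, which is identified with $A \cap N = \ker(\psi|_A) = A[n]$, a finite group scheme — giving the finite-kernel claim. For faithful flatness I would compare dimensions and use that $A$ is central: the image $AN$ is a subgroup variety, and since $\psi$ maps $AN$ onto $\psi(A) = A$ (as $\psi|_A = n$ is surjective, $A$ being an abelian variety) while $N = \ker\psi$, a dimension count gives $\dim(AN) = \dim N + \dim A = \dim G$, so $AN = G$ by connectedness; faithful flatness of the homomorphism $A \times N \to G$ with finite kernel then follows from the general theory of quotients (generic flatness plus homogeneity, as reviewed in Section 1).

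Finally, for the smoothness refinement when $k$ is perfect, I would replace $N$ by its reduced subscheme $N_{\mathrm{red}}$. Over a perfect field $N_{\mathrm{red}}$ is a smooth algebraic subgroup, and it is still normal; one checks that passing to $N_{\mathrm{red}}$ only enlarges the (finite) kernel $A \cap N$ by a connected infinitesimal factor, so the map $A \times N_{\mathrm{red}} \to G$ remains faithfully flat with finite kernel. The step I expect to be the main obstacle is the clean application of \ref{b8a}: one must verify carefully that centrality of $A$ makes $G \to G/A$ a genuine $A$-torsor with the additive action matching the lemma's hypotheses, and then track the integer $n$ through the translation normalization of \ref{b6d} to conclude that $\psi|_A$ is exactly multiplication by $n$ rather than merely an isogeny of some other degree.
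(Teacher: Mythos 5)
Your overall strategy is the paper's: produce a homomorphism $\psi\colon G\to A$ restricting to multiplication by an integer $n$ on $A$, set $N=\Ker(\psi)$, and pass to $N_{\mathrm{red}}$ over a perfect field. But there is a genuine gap at exactly the step you flag as the main obstacle: Lemma \ref{b8a} does not apply in the way you invoke it. That lemma concerns a torsor $V$ under a commutative group variety over the \emph{ground field} --- a single variety with a simply transitive action --- whereas the quotient $\pi\colon G\to Q:=G/A$ only exhibits $G$ as an $A$-torsor over the \emph{base} $Q$. Neither candidate for the ``$V$'' of the lemma in your setup satisfies its hypotheses: the total space $G$ is not an $A$-torsor over $k$ (translation by $A$ is not simply transitive on $G$ unless $A=G$), and $Q$ itself carries only the trivial $A$-action, so a morphism $\phi_{0}\colon Q\to A$ with $\phi_{0}(v+a)=\phi_{0}(v)+na$ does not exist as written.

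The missing idea is to pass to the generic fibre: let $K=k(Q)$ and let $V\to\Spec(K)$ be the generic fibre of $\pi$, which \emph{is} an $A_{K}$-torsor over the field $K$. Lemma \ref{b8a} applied over $K$ (this is precisely why it is stated over an arbitrary field and works from a point with separable residue field) yields a $K$-morphism $V\to A_{K}$, i.e., a rational map $G\da A$ over $k$; one then invokes (\ref{b6c}) --- every rational map from a smooth variety to an abelian variety is everywhere defined --- to extend it to a morphism $\phi^{\prime}\colon G\to A$, and the identity $\phi^{\prime}(g+a)=\phi^{\prime}(g)+na$ holds everywhere because it holds on a dense open set. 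From that point on your argument (normalizing by a translation via \ref{b6d}, taking $N=\Ker$, using centrality of $A$ from \ref{b7} to see that $A\times N\to G$ is a homomorphism, identifying the kernel with $A_{n}$, surjectivity by a dimension count, and $N_{\mathrm{red}}$ in the perfect case) agrees with the paper and is sound.
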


\begin{proof}
Because $A$ is normal in $G$ (see \ref{b7}), there exists a faithfully flat
homomorphism $\pi\colon G\rightarrow Q$ with kernel $A$. Because $A$ is
smooth, the map $\pi$ has smooth fibres of constant dimension and so is
smooth. Let $V\rightarrow\Spec(K)$ be the generic fibre of $\pi$. Then $V$ is
an $A_{K}$-torsor over $K$. The morphism $\phi\colon V\rightarrow A_{K}$ over
$K$ given by the lemma, extends to a rational map $G\da Q\times A$ over $k$.
On projecting to $A$, we get a rational map $G\da A$. This extends to a
morphism (see \ref{b6c})
\[
\phi^{\prime}\colon G\rightarrow A
\]
satisfying%
\[
\phi^{\prime}(g+a)=\phi^{\prime}(g)+na
\]
(because it does so on an open set). After we apply a translation, this will
be a homomorphism whose restriction to $A$ is multiplication by $n$ (see
\ref{b6d}).

The kernel of $\phi^{\prime}\colon G\rightarrow A$ is a normal algebraic
subgroup $N$ of $G$. Because $A$ is contained in the centre of $G$ (see
\ref{b7}), the map (\ref{eq2}) is a homomorphism. It is faithfully flat
because the homomorphism $A\rightarrow G/N\simeq A$ is multiplication by $n$,
and its kernel is $N\cap A$, which is the finite group scheme $A_{n}$.

When $k$ is perfect, we can replace $N$ with $N_{\mathrm{red}}$, which is a
smooth algebraic subgroup of $N$.
\end{proof}

\begin{nt}
The statements (\ref{b8a}) and (\ref{b8}) correspond to Theorem 14 and its
Corollary (p.434) in Rosenlicht's paper.
\end{nt}

\section{The quasi-projectivity of algebraic groups}

\begin{theorem}
\label{b6f}Every abelian variety is projective.
\end{theorem}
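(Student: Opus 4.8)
The plan is to produce an ample invertible sheaf on $A$; since a complete variety that carries an ample invertible sheaf is projective, this suffices. (If $\dim A=0$ there is nothing to prove, so assume $\dim A>0$, whence $A$, being complete and connected of positive dimension, is not affine.) The sheaf will come from an effective divisor with affine complement. Concretely, I would choose an affine open neighbourhood $U$ of the neutral element $e$. Because $A$ is a smooth complete variety and $U$ is a nonempty proper affine open subset, the complement $D:=A\setminus U$, taken with its reduced structure, is a nonempty closed subset of pure codimension $1$ (the complement of an affine open subset of a smooth complete variety is pure of codimension one); as $A$ is smooth, $D$ is an effective Cartier divisor. Set $L:=\mathcal{O}_{A}(D)$, so that $A\setminus\operatorname{Supp}(D)=U$ is affine.

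Next I would control the translates of $D$. Consider $K(L)=\{a\in A : t_{a}^{\ast}L\cong L\}$, the kernel of the homomorphism $a\mapsto t_{a}^{\ast}L\otimes L^{-1}$ from $A$ to $\Pic(A)$, and I claim its identity component $B:=K(L)^{0}$ is trivial. Suppose not, so that $B$ is a nonzero abelian subvariety; it contains $e$, and since $e\in U$ we have $B\not\subseteq\operatorname{Supp}(D)$, so $D\cap B$ is an effective divisor on $B$ whose class is $L|_{B}$. But for $b\in B$ we have $t_{b}^{\ast}L\cong L$, hence $t_{b}^{\ast}(L|_{B})\cong L|_{B}$ for all $b\in B$, so $L|_{B}$ is algebraically trivial, i.e.\ lies in $\Pic^{0}(B)$. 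A nonzero element of $\Pic^{0}(B)$ has no nonzero global section, forcing $D\cap B=\emptyset$; then $B\subseteq U$ exhibits the positive-dimensional complete variety $B$ as a closed subvariety of the affine variety $U$, which is impossible, since a complete connected subvariety of an affine scheme is a point. Hence $B=\{e\}$ and $K(L)$ is finite.

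The final, and hardest, step is to upgrade the finiteness of $K(L)$ to the ampleness of $L$. This rests on the theorem of the cube and the resulting analysis of the morphism $\varphi_{L}\colon A\to A^{\vee}$, $a\mapsto t_{a}^{\ast}L\otimes L^{-1}$, whose kernel is exactly $K(L)$: for an effective $L$ with $K(L)$ finite one has that $L$ is ample, and in fact $L^{\otimes 3}$ is very ample. I would quote this criterion from the theory of abelian varieties in \cite{milne1986ab} rather than redevelop the cube machinery, as it is the one genuinely substantial ingredient here; with it in hand, $L$ is ample and $A$ is projective. Since ampleness is geometric, the intersection-theoretic arguments above may be run over $k^{\mathrm{al}}$ and the conclusion descends, so no essential use of the base field beyond the standing conventions is needed.
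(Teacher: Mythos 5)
Your proof is correct, but it follows a genuinely different route from the one sketched in the paper. The paper reproduces Weil's argument: choose irreducible divisors $Z_{1},\ldots,Z_{n}$ with $\bigcap Z_{i}=\{0\}$ and $\bigcap T_{0}(Z_{i})=\{0\}$, set $D=\sum Z_{i}$, and show directly--using the theorem of the square in the form $D_{a}+D_{b}+D_{c}\sim3D$ for $a+b+c=0$--that the linear system $|3D|$ separates points and tangent vectors, hence embeds $A$ into projective space; the verification is deferred to \cite{milne1986ab}, 7.1. You instead follow Mumford's route: take $D$ to be the (pure codimension-one) boundary of an affine open neighbourhood of $e$, prove that $K(L)$ is finite by the "no positive-dimensional complete subvariety of an affine variety" trick, and then quote the criterion that an effective $L$ with $K(L)$ finite is ample. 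Both arguments start from essentially the same divisor (in \cite{milne1986ab} the $Z_{i}$ are in fact produced as the components of the complement of an affine open neighbourhood of $0$), and both defer the one substantial computation to the literature, so the two sketches are of comparable depth. What Weil's version buys is economy of machinery: it needs only the theorem of the square and elementary linear-system arguments, with no mention of $\Pic^{0}$, the dual abelian variety, or cohomology. What your version buys is a cleaner logical skeleton: the geometric input (affine complement forces $K(L)$ finite) is isolated in a short self-contained step, and all the cube/square machinery is packaged into a single quotable ampleness criterion. One small point of care: over an imperfect or positive-characteristic field $K(L)$ need not be reduced, so "$B=K(L)^{0}$ is an abelian subvariety" should be read as the reduced identity component over $k^{\mathrm{al}}$--which is exactly where you run the argument--and the descent of ampleness to $k$ then goes through as you say.
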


\begin{proof}
We sketch the proof in \cite{weil1957}. By a standard argument, we may assume
that the base field $k$ is algebraically closed. In order to prove that a
variety is projective, it suffices to find a linear system $D$ of divisors
that separates points and tangent vectors, for then any basis of the vector
space%
\[
L(D)=\{f\mid(f)+D\geq0\}\cup\{0\}
\]
provides a projective embedding. For an abelian variety $A$, it is easy to
find a finite family $(Z_{i})_{1\leq i\leq n}$ of irreducible divisors such
that $\bigcap\nolimits_{i=1}^{n}Z_{i}=\{0\}$ and $\bigcap\nolimits_{i=1}%
^{n}T_{0}(Z_{i})=\{0\}$; here $T_{0}$ denotes the tangent space at $0$. For
the divisor $D=\sum_{i=1}^{n}Z_{i}$, one shows that $3D$ separates points and
tangent vectors on $A$, and hence defines a projective embedding of $A$. The
$3$ is needed for the following consequence of the theorem of the square: for
all points $a,b,c$ of $A$ such that $a+b+c=0$,%
\[
D_{a}+D_{b}+D_{c}\sim3D,
\]
where $D_{a}$ is the translation of $D$ by $a$. See \cite{milne1986ab}, 7.1,
for the details.
\end{proof}

\begin{theorem}
\label{b6g}Every homogeneous space for a group variety is quasi-projective.
\end{theorem}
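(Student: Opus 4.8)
The plan is to present $X$ as a quotient $G/H$, to build an invertible sheaf on $X$ out of the boundary of an affine open subset, and to prove that a suitable such sheaf is ample; since a separated scheme of finite type over $k$ carrying an ample invertible sheaf is quasi-projective, this will finish the proof. First I would reduce to the case $k=\bar{k}$, quasi-projectivity being insensitive to extension of the base field. Over $\bar{k}$ the action is transitive on closed points, so fixing $x_{0}\in X$ and setting $H=\Stab(x_{0})$ identifies $X$ with $G/H$; in particular $X$ is a smooth (hence normal and locally factorial) variety on which $G$ acts transitively. For contrast, when $G$ is affine this is the classical theorem of Chevalley: a faithful representation $G\hookrightarrow\GL_{V}$ realizing $H$ as the stabilizer of a line embeds $G/H$ into $\mathbb{P}(V)$. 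The whole point is that once $G$ has a nontrivial abelian part the finite-dimensional representations no longer separate the points of $X$, and the projective embedding must be manufactured from line bundles instead.

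For the construction, choose any nonempty affine open $U\subseteq X$ and put $D=X\setminus U$. Because $X$ is normal, $D$ is of pure codimension one: a component of codimension $\geq2$ could be removed from the complement without enlarging the ring of regular functions (regular functions extend across closed subsets of codimension $\geq2$ on a normal variety), which would contradict the affineness of $U$. Since $X$ is locally factorial, $D$ is then the support of an effective Cartier divisor, and I set $L=\mathcal{O}_{X}(D)$. Transitivity of the action shows that the translates $gU$ cover $X$, and by quasi-compactness finitely many do, say $X=\bigcup_{i=1}^{m}g_{i}U$. Writing $D_{i}$ for the translate of $D$ by $g_{i}$, we have that each $X\setminus\operatorname{Supp}(D_{i})=g_{i}U$ is affine and that $\bigcap_{i}\operatorname{Supp}(D_{i})=\varnothing$.

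The main obstacle is to convert this affine covering into the statement that a single invertible sheaf is ample. The difficulty is that $\mathcal{O}_{X}(D_{i})$ is a translate $t_{g_{i}}^{\ast}L$ of $L$, and since $G$ is connected these translates differ from $L$ only by classes in $\Pic^{0}(X)$; thus the $D_{i}$ are in general not linearly equivalent, so they do not furnish sections of the powers of one sheaf. I expect the real work to lie precisely in killing this $\Pic^{0}$-ambiguity. I would do so by a theorem-of-the-square argument, passing to a symmetric combination of translates so that the relevant twist becomes independent of the translating element up to isomorphism; this is exactly the device underlying the projectivity of abelian varieties in \ref{b6f}, and is the point at which the abelian part of $G$ (and results such as \ref{b8}) must be brought to bear. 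The outcome I am aiming for is a single sheaf $M$, generated by global sections $s_{0},\dots,s_{n}$ whose non-vanishing loci $X_{s_{i}}$ are affine and cover $X$.

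Granting such an $M$, the conclusion is formal. The sections $s_{i}$ define a morphism $\varphi\colon X\rightarrow\mathbb{P}^{n}$ with $\varphi^{-1}(\{y_{i}\neq0\})=X_{s_{i}}$ affine, so $\varphi$ is an affine morphism; as the pullback of an ample sheaf along an affine morphism is ample, $M=\varphi^{\ast}\mathcal{O}(1)$ is ample on $X$. Therefore $X$ is quasi-projective over $\bar{k}$, and descending along $\bar{k}/k$ gives the result over $k$.
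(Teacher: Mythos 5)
Your setup is sensible, and your formal endgame is correct: sections of a line bundle with affine non-vanishing loci covering $X$ give an affine (hence quasi-affine) morphism to $\mathbb{P}^{n}$, so the pullback of $\mathcal{O}(1)$ is ample and $X$ is quasi-projective. The reduction to $\bar{k}$, the identification $X\simeq G/H$ with $X$ smooth, the purity in codimension one of the boundary $D=X\smallsetminus U$ of an affine open, and the covering of $X$ by finitely many affine translates $g_{i}U$ are all fine. But there is a genuine gap exactly where you write that you ``expect the real work to lie'': the step converting the family of translated divisors $D_{i}$ into sections of a single invertible sheaf is not carried out, and it is the entire content of the theorem. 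A theorem-of-the-square argument is not available on $X$ itself --- the theorem of the square is a statement about complete varieties, proved via the seesaw principle and the Picard variety, both of which need properness, and $X$ here is precisely not complete. Likewise your assertion that the translates $t_{g_{i}}^{\ast}L$ differ from $L$ by classes in ``$\Pic^{0}(X)$'' presupposes a good Picard variety for the non-proper $X$, which cannot be taken for granted. As written, the plan stops at the point where the actual difficulty begins.

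This is also where your route diverges from the paper's, which follows Chow (1957): one first invokes Chow's lemma to produce a projective variety $V^{\prime}$ with an open subvariety $U^{\prime}$ mapping birationally and surjectively onto $X$, works with linear systems on the complete $V^{\prime}$ (which also cures the possible infinite-dimensionality of $L(D)$ on the non-complete $X$), and substitutes the Picard variety of $V^{\prime}$ for the missing theorem of the square. Those two devices are precisely the answer to the obstacle you identified but did not overcome. One further caution: if you meant to supply the missing step by using the structure of $G$ as an extension of an abelian variety by an affine group, that is Theorem \ref{b10}, whose proof in this paper depends on \ref{b9} and hence on \ref{b6g}, so the argument would become circular; only results such as \ref{b8} are safely available here, as Aside \ref{b13} makes explicit.
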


\begin{proof}
We sketch the proof in \cite{chow1957}. In extending Weil's proof (see above)
to a homogeneous space, two problems present themselves. First, the vector
space $L(D)$ may be infinite dimensional. To circumvent this problem, Chow
proves the following result (now known as Chow's lemma):
\begin{quote} Let $V$ be an
algebraic variety. There exists an open subvariety $U$ of a projective variety
$V^{\prime}$ and a surjective birational regular map $U\rightarrow V$; when
$V$ is complete, $U=V^{\prime}$.
\end{quote}
Chow then uses linear systems on
$V^{\prime}$ rather than $V$. The second problem is the absence of a theorem
of the square for $V^{\prime}$. Chow circumvents this problem by making use of
the Picard variety of $V^{\prime}$.
\end{proof}

\section{Rosenlicht's dichotomy}

Throughout this section, $k$ is algebraically closed. The next result is the
second key ingredient in Rosenlicht's proof of the Barsotti-Chevalley theorem.

\begin{proposition}
\label{b9}Let $G$ be a group variety over an algebraically closed field $k$.
Either $G$ is complete or it contains an affine algebraic subgroup of
dimension $>0$.
\end{proposition}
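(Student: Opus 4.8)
The plan is to show that a non-complete $G$ must contain a positive-dimensional affine subgroup; together with the trivial alternative this is exactly the stated dichotomy. The engine will be Proposition~\ref{b6h}: a connected group acting faithfully on a variety with a fixed point is affine. So the whole problem reduces to exhibiting a positive-dimensional subgroup of $G$ that fixes a point of some complete variety on which it acts faithfully.

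First I would compactify. As $G$ is a homogeneous space under left translation it is quasi-projective by~\ref{b6g}, so I may realize $G$ as a dense open subvariety of a complete irreducible variety $\bar{G}$ of dimension $\dim G$. Because $G$ is not complete, the boundary $B=\bar{G}\setminus G$ is a nonempty closed subset, and $\dim B\leq\dim G-1$.

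Next I want $G$ to act on the whole of $\bar{G}$. Left translation is a genuine action on the dense open $G$, and the key point is to arrange that it extends to an honest regular action of $G$ on $\bar{G}$ (not merely a birational one), so that orbits and stabilizers are available. Granting this, $G$ is the open orbit, so $B$ is $G$-stable, closed, hence complete and nonempty. Among the orbits contained in $B$ choose one, $O$, of minimal dimension; a minimal orbit is closed. Fix a point $P\in O$ and set $H=\Stab_G(P)^{\circ}_{\mathrm{red}}$, a connected subgroup variety (here $k$ is algebraically closed, hence perfect). From $O\cong G/\Stab_G(P)$ we get $\dim H=\dim G-\dim O\geq\dim G-\dim B\geq 1$, so $H$ is positive-dimensional. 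The $G$-action on $\bar{G}$ is faithful, being already free on the dense open $G$; hence $H$ acts faithfully on $\bar{G}$ and fixes $P$, and Proposition~\ref{b6h} makes $H$ affine. This $H$ is the required subgroup.

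The hard part is the step I flagged: upgrading the a priori only rational translation action on the compactification to a genuine regular action on a complete variety, i.e.\ producing a $G$-equivariant completion of $G$. A rational self-map of the normal $\bar{G}$ is defined away from codimension~$2$, but translations can still contract boundary divisors, so a naive codimension count is not enough and one really needs a regularization theorem for rational group actions. Once the action is regular the orbit--stabilizer bookkeeping above is routine; within that regular setting one may alternatively replace the minimal-orbit argument by the observation that the finitely many boundary prime divisors are permuted by $G$, that a connected group must fix each of them, and that descending on dimension again produces a point fixed by a positive-dimensional subgroup to which~\ref{b6h} applies. Either way, controlling the boundary under translation is the crux, and the rest is formal.
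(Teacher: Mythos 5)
Your reduction to Proposition~\ref{b6h} via a fixed point on the boundary of a completion is exactly the right target, and the paper itself sketches precisely this argument in the paragraph following the statement of Proposition~\ref{b9} --- and then discards it, because the step you flag as ``the hard part'' is not available. That step is a genuine gap, and it is the entire content of the proposition: there is no elementary regularization theorem producing a $G$-equivariant completion of $G$ for a general (non-affine) group variety. The known routes to such an equivariant completion either require strong resolution of singularities (the paper cites this as something one may ``hope for one day'') or, for the standard equivariant-completion theorems, pass through the structure theory of $G$ --- in particular through the Barsotti--Chevalley theorem itself --- so invoking one here would be circular. Everything after that point in your argument (minimal closed orbit, orbit--stabilizer dimension count, faithfulness from freeness on the open orbit) is routine precisely because you have assumed away the difficulty.

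Rosenlicht's actual proof never upgrades the rational action to a regular one. It keeps only the rational map $G\times\bar{X}\dashrightarrow\bar{X}$, defined off a set of codimension $\geq 2$ (\ref{b6a}), checks via Lemma~\ref{b9a} that it is a rational action preserving the boundary $E$, and then must do real work to extract a fixed point: Lemma~\ref{b9c} is used to modify the completion so that the image of a boundary component under the rational action is a divisor $D$, and a birational involution $(g,x)\mapsto(g^{-1},g\cdot x)$ on $G\times D$ produces a point $P$ at which enough of the action is defined to form the stabilizer $H$. Even then the conclusion is not immediate: because the action is only rational, \ref{b6h} cannot be applied to $H$ directly unless $H=G$, so the proof needs an induction on $\dim G$ together with the Rosenlicht decomposition theorem (\ref{b8}) to handle the case where $H$ is a proper complete subgroup. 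None of this machinery appears in your proposal, and without it the argument does not close.
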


In the presence of sufficiently strong resolution, the proof of (\ref{b9}) is
easy. Assume that $G$ is not complete, and let $X$ denote $G$ regarded as a
left homogeneous space for $G$. We may hope that (one day) canonical
resolution will give us a complete variety $\bar{X}$ containing $X$ as an open
subvariety and such that (a) $\bar{X}\smallsetminus X$ is strict divisor with
normal crossings, and (b) the action of $G$ on $X$ extends to an action of $G$
on $\bar{X}$ (cf. \cite{villamayor1992}, 7.6.3). The action of $G$ on $\bar
{X}$ preserves $E\overset{\textup{{\tiny def}}}{=}\bar{X}\smallsetminus X$.
Let $P\in E$, and let $H$ be the isotropy group at $P$. Then $H$ is an
algebraic subgroup of $G$ of dimension at least $\dim G-\dim E=1$. As it fixes
$P$ and acts faithfully on $X$, it is affine (\ref{b6h}). 

Lacking such a resolution theorem, we follow Rosenlicht (and \cite{brion2013}).

\begin{definition}
\label{b9b}A rational action of a group variety $G$ on a variety $X$ is a
rational map $G\times X\da X$ satisfying the following two conditions:

\begin{enumerate}
\item Let $e$ be the neutral element of $G$, and let $x$ be an element of $X$.
If $e\cdot x$ is defined, then $e\cdot x=x$.

\item Let $g,h\in G$, and let $x\in X$. If $h\cdot x$ and $g\cdot(h\cdot x)$
are defined, then so also is $gh\cdot x$, and it equals $g\cdot(h\cdot x)$.
\end{enumerate}
\end{definition}

\begin{lemma}
\label{b9a}Let $G$ be a group variety and let $X$ be a variety. A rational map
$\cdot\colon G\times X\da X$ is a rational action of $G$ on $X$ if there
exists a dense open subvariety $X_{0}$ of $X$ such that $\cdot$ restricts to a
regular action $G\times X_{0}\rightarrow X_{0}$ of $G$ on $X_{0}$.
\end{lemma}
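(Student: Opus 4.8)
The plan is to verify the two conditions of Definition \ref{b9b} directly, converting agreement on the dense open $X_0$ into the required ``whenever-defined'' identities by means of the separatedness of $X$. Write $a\colon G\times X\da X$ for the given rational map and let $\Omega$ be its domain of definition; by hypothesis $G\times X_0\subseteq\Omega$, and $a$ restricts on $G\times X_0$ to a regular action with image in $X_0$.

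For condition (a), I would compare the two morphisms $x\mapsto e\cdot x$ and $x\mapsto x$ on the open set $\Omega_e=\{x:(e,x)\in\Omega\}$, which contains $X_0$ and is therefore dense. They agree on $X_0$ because the restriction to $G\times X_0$ is a genuine action; since $X$ is separated and $\Omega_e$ is reduced, two morphisms $\Omega_e\to X$ agreeing on the dense subset $X_0$ coincide on all of $\Omega_e$. Hence $e\cdot x=x$ whenever $e\cdot x$ is defined.

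For condition (b), I would introduce the rational maps $\mu,\nu\colon G\times G\times X\da X$ with $\mu(g,h,x)=g\cdot(h\cdot x)$ and $\nu(g,h,x)=(gh)\cdot x$. On the dense open $G\times G\times X_0$ both are defined and coincide, by associativity of the regular action; as $X$ is separated, $\mu$ and $\nu$ are equal as rational maps and in particular share the same maximal domain of definition. The value assertion in (b) then follows wherever both are defined. The genuinely delicate point --- and the step I expect to be the main obstacle --- is the domain assertion: (b) demands that whenever $\mu$ is defined at $(g,h,x)$, the point $(gh,x)$ actually lies in $\Omega$. Knowing only that $\mu=\nu$ as rational maps is not enough, since the naive locus $\{(g,h,x):(gh,x)\in\Omega\}$ on which $\nu$ is literally evaluated could a priori be smaller than the common maximal domain.

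To close this gap I would use that $\nu=a\circ\beta$, where $\beta\colon G\times G\times X\to G\times X$, $(g,h,x)\mapsto(gh,x)$, equals $m\times\mathrm{id}_X$ for the multiplication $m\colon G\times G\to G$ and is therefore smooth, surjective, hence faithfully flat. The key lemma is that the domain of definition of a rational map pulls back along a faithfully flat morphism, that is, $\mathrm{maxdom}(a\circ\beta)=\beta^{-1}(\mathrm{maxdom}(a))=\beta^{-1}(\Omega)$. After choosing an affine chart of the separated variety $X$, this reduces to a statement about the flat local homomorphism $\mathcal{O}_{G\times X,\,\beta(y)}\to\mathcal{O}_{G\times G\times X,\,y}$: a fraction $p/q$ that becomes regular upstairs is already regular downstairs, which is faithfully flat descent of the principal ideal (namely $q\mathcal{O}'\cap\mathcal{O}=q\mathcal{O}$). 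Granting this, $\beta^{-1}(\Omega)$ is at once the naive domain of $\nu$ and, since $\nu=\mu$, the maximal domain of $\mu$; as the naive domain of $\mu$ is contained in its maximal domain, the locus where $h\cdot x$ and $g\cdot(h\cdot x)$ are defined lies inside $\beta^{-1}(\Omega)=\{(g,h,x):(gh,x)\in\Omega\}$. This is precisely the domain half of (b), and the equality $g\cdot(h\cdot x)=(gh)\cdot x$ holds there because $\mu=\nu$, completing the verification that $a$ is a rational action.
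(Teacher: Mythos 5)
Your proof is correct; part (a) coincides with the paper's, but for part (b) you take a genuinely different route. The paper argues with closures of graphs: it forms the closure $\Gamma$ of the graph of $(u,v,z)\mapsto(v\cdot z,\,u\cdot(v\cdot z),\,uv\cdot z)$ over $G\times G\times X_{0}$, observes that the last two output coordinates agree there so that $\Gamma$ lies over the diagonal $\Delta(X)$, identifies $\Gamma$ with the closure $\Gamma'$ of the graph of the truncated map $(u,v,z)\mapsto(v\cdot z,\,u\cdot(v\cdot z))$, and transports the property \textquotedblleft the projection to $G\times G\times X$ is an isomorphism near $(g,h,x)$\textquotedblright\ from $\Gamma'$ to $\Gamma$. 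You instead phrase everything in terms of equality of rational maps and their maximal domains. The chief merit of your version is that it isolates, and honestly resolves, the one delicate point: knowing that $\mu$ is defined at $(g,h,x)$ only says that the rational map $\nu=a\circ\beta$ extends to a morphism near $(g,h,x)$, which is a priori weaker than $(gh,x)\in\Omega$; the paper's closing sentence (\textquotedblleft This implies that $gh\cdot x$ is defined\textquotedblright) quietly relies on the same fact. Your lemma $\mathrm{maxdom}(a\circ\beta)=\beta^{-1}(\Omega)$ and its proof via $q\mathcal{O}'\cap\mathcal{O}=q\mathcal{O}$ for the flat local homomorphism are correct (modulo the routine point that the local extension lands in the chosen affine chart, which follows from density and the closedness of the chart in its ambient affine space). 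It is, however, stronger than needed: $\beta$ is isomorphic to the projection $G\times(G\times X)\to G\times X$ via $(u,v,z)\mapsto(v,(uv,z))$, so the same identity of domains follows by restricting a local extension of $a\circ\beta$ to a slice $\{v\}\times G\times X$, with no commutative algebra at all. Either way, the argument is complete and, if anything, more transparent than the graph-closure computation at the step where the two differ.
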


\begin{proof}
Let $U$ be the (largest) open subvariety of $G\times X$ on which $\cdot$ is defined.

(a) Consider the map $\left(  \{e\}\times X\right)  \cap U\rightarrow X$,
$(e,x)\mapsto e\cdot x$. This agrees with the map $(e,x)\mapsto x$ on the
dense open subset $\{e\}\times X_{0}$ of $\left(  \{e\}\times X\right)  \cap
U$, and so agrees with it on the whole of $\left(  \{e\}\times X\right)  \cap
U$.

(b) Let $\Gamma\subset G\times G\times X\times X\times X\times X$ be the
closure of the graph $\Gamma_{\alpha}$ of the regular map%
\[
\alpha\colon G\times G\times X_{0}\rightarrow X_{0}\times X_{0}\times
X_{0},\quad(u,v,z)\mapsto(v\cdot z,u\cdot(v\cdot z),uv\cdot z).
\]
Let $\Delta(X_{0})$ denote the diagonal in $X_{0}\times X_{0}$. Then
$\Gamma_{\alpha}$ lies in $G\times G\times X_{0}\times X_{0}\times\Delta
(X_{0})$, and $\Gamma$ is the closure of $\Gamma_{\alpha}$ in $G\times G\times
X\times X\times\Delta(X)$.

Let $\Gamma^{\prime}\subset G\times G\times X\times X\times X$ be the closure
of the graph $\Gamma_{\alpha^{\prime}}$ of the regular map%
\[
\alpha^{\prime}\colon G\times G\times X_{0}\rightarrow X_{0}\times X_{0}%
,\quad(u,v,z)\mapsto(v\cdot z,u\cdot(v\cdot z)).
\]
The projection $G\times G\times X\times X\times\Delta(X)\rightarrow G\times
G\times X\times X\times X$ is an isomorphism, and maps $\Gamma_{\alpha}$
isomorphically onto $\Gamma_{\alpha^{\prime}}$. Therefore, it maps $\Gamma$
isomorphically onto $\Gamma^{\prime}$.

Let $(g,h,x)$ be as in (b) of (\ref{b9b}). As $\alpha^{\prime}$ is defined at
$(g,h,x)$, the projection $\Gamma^{\prime}\rightarrow G\times G\times X$ is an
isomorphism over a neighbourhood of $(g,h,x)$. Therefore, the projection
$\Gamma\rightarrow G\times G\times X$ is an isomorphism over a (the same)
neighbourhood of $(g,h,x)$. This implies that $gh\cdot x$ is defined (and
equals $g\cdot(h\cdot x)$).
\end{proof}

\begin{lemma}
\label{b9c}Let $\alpha\colon X\rightarrow Y$ be a dominant regular map of
irreducible varieties with $Y$ complete and let $D$ be prime divisor on $X$
such that $D\rightarrow Y$ is not dominant. Then there exists a complete
variety $Y^{\prime}$ and a birational regular map $\beta\colon Y^{\prime
}\rightarrow Y$ such that $\beta^{-1}\alpha(D)$ is a divisor on $Y^{\prime}$.
\end{lemma}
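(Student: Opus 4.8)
The plan is to reduce the lemma to a statement about the single closed subvariety $Z\overset{\textup{def}}{=}\overline{\alpha(D)}$ of $Y$ and then resolve it by one blow-up. First I would observe that the data $X,\alpha,D$ enter only through $Z$: because $\alpha$ is regular and $D\to Y$ is not dominant, $Z$ is a proper closed \emph{irreducible} subvariety of $Y$ (irreducible since $D$ is, proper since $\overline{\alpha(D)}\neq Y$), and the set $\beta^{-1}\alpha(D)$ will be computed as $\beta^{-1}(Z)$. Thus the content of the lemma is purely about $Y$: given a proper closed subvariety $Z$ of the complete irreducible variety $Y$ (possibly of codimension $\geq 2$), produce a complete variety $Y'$ and a birational morphism $\beta\colon Y'\to Y$ whose preimage of $Z$ is a divisor.

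The tool I would use is the blow-up. Let $\beta\colon Y'\to Y$ be the blow-up of $Y$ along the ideal sheaf $\mathcal{I}_{Z}$ of $Z$. This $\beta$ is projective, hence proper, and it restricts to an isomorphism over the dense open set $Y\smallsetminus Z$; therefore $\beta$ is birational and $Y'$ is again irreducible. Because the blow-up of an integral scheme along a nonzero ideal is integral, $Y'$ is a reduced irreducible separated scheme of finite type over $k$, i.e.\ a variety (recall that $k$ is algebraically closed throughout this section, so reduced $=$ geometrically reduced). Completeness of $Y'$ follows by composing proper morphisms $Y'\to Y\to\Spec(k)$.

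It remains to check that $\beta^{-1}(Z)$ is a divisor, and here I would invoke the defining property of the blow-up: the inverse image ideal sheaf $\mathcal{I}_{Z}\mathcal{O}_{Y'}$ is invertible, so it cuts out an effective Cartier divisor $E$ whose support is exactly $\beta^{-1}(Z)$. Since $Z\neq Y$ and $Y$ is integral, $\mathcal{I}_{Z}\mathcal{O}_{Y'}$ is a nonzero invertible ideal on the integral scheme $Y'$, hence locally generated by a single nonzerodivisor; by Krull's principal ideal theorem its zero locus $\beta^{-1}(Z)$ is pure of codimension $1$. This locus is nonempty (as $Z\neq\emptyset$ and $\beta$ is surjective) and is not all of $Y'$ (as $\beta$ is an isomorphism over the nonempty open $Y\smallsetminus Z$), so it is a genuine divisor on $Y'$.

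The one step carrying real content—and the step I would flag as the main point rather than a formal obstacle—is the pure-codimension-$1$ claim: blowing up is precisely the operation that converts a locus of arbitrary codimension into a codimension-$1$ locus, and verifying this amounts to combining the universal property of $\mathrm{Bl}_{Z}Y$ (invertibility of $\mathcal{I}_{Z}\mathcal{O}_{Y'}$) with Krull's theorem. The remaining assertions—that $Y'$ is a complete variety and that $\beta$ is birational and regular—are standard properties of blow-ups of complete integral varieties and require only routine verification.
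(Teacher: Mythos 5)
Your argument is internally correct, but it proves a different statement from the one the lemma asserts, and the difference is exactly where the content lies. In the lemma, $\beta^{-1}\alpha(D)$ means $\left(\beta^{-1}\circ\alpha\right)(D)$: the image of $D$ under the composite \emph{rational} map $\beta^{-1}\alpha\colon X\da Y^{\prime}$, equivalently the closure in $Y^{\prime}$ of the centre of the discrete valuation $w=v|k(Y)$, where $v$ is the divisorial valuation attached to $D$. (This is how the conclusion is used in the proof of Proposition \ref{b9}: the new divisor must be irreducible and must consist generically of points of the form $h\cdot y$ with $y$ in the original divisor, which the total preimage $\beta^{-1}\bigl(\overline{\alpha(D)}\bigr)$ is not.) Your blow-up $Y^{\prime}=\mathrm{Bl}_{Z}Y$ along $Z=\overline{\alpha(D)}$ does make the total preimage of $Z$ a (possibly reducible) Cartier divisor, but the centre of $w$ on $Y^{\prime}$ can still have codimension $\geq2$. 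Concretely, take $Y=\mathbb{P}^{3}$, let $X$ be obtained by blowing up a point $P$ and then a closed point $Q$ of the resulting exceptional $\mathbb{P}^{2}$, let $\alpha$ be the composite blow-down and $D$ the second exceptional divisor. Then $Z=\{P\}$, your $Y^{\prime}$ is $\mathrm{Bl}_{P}\mathbb{P}^{3}$, and the image of $D$ under $\beta^{-1}\alpha$ is the single point $Q$ --- not a divisor --- even though $\beta^{-1}(Z)\simeq\mathbb{P}^{2}$ is one.

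The paper's proof works with the valuation $w$ itself rather than with the subvariety $Z$; a blow-up along $Z$ sees only the centre of $w$ on $Y$ and forgets the rest of $w$. One first shows $\mathrm{tr\,deg}_{k}k(w)\geq n-1$ (using that $D$ has codimension $1$ and $\alpha$ is dominant), then chooses $f_{1},\dots,f_{n-1}\in\mathcal{O}_{w}$ whose residues are algebraically independent over $k$, and takes $Y^{\prime}$ to be the graph of $y\mapsto(1\colon f_{1}(y)\colon\cdots\colon f_{n-1}(y))\colon Y\da\mathbb{P}^{n-1}$. By construction the centre of $w$ on $Y^{\prime}$ dominates $\mathbb{P}^{n-1}$, so its closure has dimension at least $n-1$, and it is a proper subvariety because $D\rightarrow Y$ is not dominant; hence it is a prime divisor. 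To rescue your approach you would have to iterate the blow-up along the successive centres of $w$ and prove that the process terminates; that is a genuine theorem about divisorial valuations (essentially Zariski's), not a routine verification, and the graph construction above sidesteps it entirely.
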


\begin{proof}
We regard $k(Y)$ as a subfield of $k(X)$. Let $\mathcal{O}{}_{D}$ be the local
ring of $D$ (ring of functions defined on some open subset of $D$). Then
$\mathcal{O}{}_{D}$ is a discrete valuation ring, and we let $v$ denote the
corresponding discrete valuation on $k(X)$. Let $w=v|k(Y)$. Then $w$ is a
discrete valuation on $k(Y)$, and it is nontrivial because $D\rightarrow Y$ is
not dominant. Let $\mathcal{O}{}_{w}$ be its valuation ring and $k(w)$ its
residue field. Then%
\[
\text{\textrm{tr deg}}_{k}k(w)=\text{\textrm{tr deg}}_{k}k(v)-\text{\textrm{tr
deg}}_{k(w)}k(v)\geq(\dim X-1)-(\dim X-\dim Y)=n-1
\]
where $n=\dim Y$. Let $f_{1},\ldots,f_{n-1}$ be elements of $\mathcal{O}{}%
_{w}$ whose images in $k(w)$ are algebraically independent of $k$, and let
$Y^{\prime}$ be the graph of the rational map%
\[
y\mapsto(1\colon f_{1}(y)\colon\cdots\colon f_{n-1}(y))\colon Y\da\mathbb{P}%
{}^{n-1}\text{.}%
\]
The projection maps from $Y\times\mathbb{P}{}^{n-1}$ give a birational regular
map $\beta\colon Y^{\prime}\rightarrow Y$ and a regular map $q\colon
Y^{\prime}\rightarrow\mathbb{P}{}^{n-1}$. The rational map $q\circ(\beta
^{-1}\alpha)\colon X\da\mathbb{P}^{n-1}$ restricts to the map $x\mapsto
(f_{1}(x)\colon\cdots\colon f_{n}(x))$ on $D$, which is dominant, and so
$\left(  \beta^{-1}\alpha\right)  (D)$ is a prime divisor of $Y^{\prime}$. See
\cite{brion2013}, 2.3.5, for more details.
\end{proof}

\subsubsection{Proof of Proposition \ref{b9}}

We assume that $G$ is not complete, and we use induction on $\dim(G)$ to show
that it contains an affine algebraic subgroup of dimension $>0$.

Let $X=G$ regarded as a left principal homogeneous space. According to
(\ref{b6g}), we may embed $X$ as an open subvariety of a complete algebraic
variety $\bar{X}$. If the boundary $\bar{X}\smallsetminus X$ has codimension
$\geq2$, we blow up so that it has pure codimension $1$. Finally, we replace
$\bar{X}$ with its normalization, which doesn't change $X$.

Because $G\times\bar{X}$ is normal and $\bar{X}$ is complete, the rational map
$\alpha\colon G\times\bar{X}\da\bar{X}$ is defined on an open subset $U$ of
$G\times\bar{X}$ whose complement has codimension $\geq2$ (see \ref{b6a}). Let
$E=\bar{X}\smallsetminus X$. As $E$ has pure codimension $1$ in $\bar{X}$, the
set $U\cap(G\times E)$ is open and dense in $G\times E$.

Note that if $g\in G$ and $x\in E$ are such that $g\cdot x$ is defined, then
$g\cdot x\in E$ --- otherwise $g\cdot x\in X$, and so $g^{-1}\cdot(g\cdot x)$
is defined; but then $e\cdot x$ is defined, and equals $g^{-1}\cdot(g\cdot x)$
(by \ref{b9a}); but $g^{-1}\cdot(g\cdot x)\in X$ and $e\cdot x=x\in E$, which
is a contradiction. Therefore $\alpha$ restricts to a rational action of $G$
on $E$,
\[
G\times E\da E.
\]

Let $E_{1}$ be an irreducible component of $E$. On applying (\ref{b9c}) to the
rational map $G\times X\da X$, we obtain a birational regular map $X^{\prime
}\rightarrow X$ such that the image  of $E_{1}$ under $G\times X\da X^{\prime
}$ is a divisor $D$. We normalize $X^{\prime}$ and use the birational
isomorphism between $X$ and $X^{\prime}$ to replace $X$ with $X^{\prime}$.
This allows us to assume that there is irreducible component $E_{1}^{\prime}$
of $E$ whose image $D$ under $G\times X\da X$ is divisor. For $(g,x)$ in an
open subset of $G\times D$, $g\cdot x$ is defined and $x=h\cdot y$ for some
$h\in G$, $y\in E_{1}^{\prime}$. Now $g\cdot x=g\cdot(h\cdot y)=(gh)\cdot y\in
D$. Therefore $\alpha$ restricts to a rational action of $G$ on $D$.

The rational map%
\[
(g,x)\mapsto(g^{-1},g\cdot x)\colon G\times D\rightarrow G\times D
\]
is birational (its square as a rational map is the identity map). Its image
contains an open subset of $G\times D$, and therefore intersects $U$. This
means that there exists a pair $(g_{0},P)\in G\times D$ such that
$(g_{0},P)\in U$ and $(g_{0}^{-1},g_{0}\cdot P)\in U$, i.e., such that
$g_{0}\cdot P$ and $g_{0}^{-1}\cdot(g_{0}\cdot P)$ are both defined. Therefore
$e\cdot P$ is defined and equals $P$ (\ref{b9a}).

Let $H^{\prime}=\{g\in V\mid gP=P\}$. Then $e\in H^{\prime}$ and the
multiplication map $G\times G\rightarrow G$ defines a rational map $H^{\prime
}\times H^{\prime}\da H^{\prime}$. Therefore, the closure $H$ of $H^{\prime}$
in $G$ is an algebraic subgroup of $G$. Its dimension is at least $\dim G-\dim
D=1$.

Suppose first that $H\neq G$. If $H$ is not complete, then (by induction) it
contains an affine algebraic subgroup of dimension $>0$. On the other hand, if
$H$ is complete, then there exists an almost-complement $N$ of $H$ in $G$ (see
\ref{b8}), which is not complete because $G$ isn't. By induction, $N$ contains
an affine algebraic subgroup of dimension $>0$.

It remains to treat the case $H=G$. As $G$ fixes $P$, it acts on
$\mathcal{O}{}_{U,P}$, and, as in the proof of \ref{b6h},  the homomorphism
$\rho\colon G\rightarrow\GL(\mathcal{O}{}_{P}/\mathfrak{m}{}_{P}^{n})$ is
faithful for some $n$, which implies that $G$ itself is affine.

\section{The Barsotti-Chevalley theorem}

\begin{theorem}
[\cite{barsotti1955}, \cite{chevalley1960}, Raynaud]\label{b10}Let $G$ be a
group variety over a field $k$. There exists a smallest connected affine
normal algebraic subgroup $N$ of $G$ such that $G/N$ is an abelian variety.
When $N$ is smooth, its formation commutes with extension of the base field.
When $k$ is perfect, $N$ is smooth$.$
\end{theorem}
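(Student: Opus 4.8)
The plan is to reduce to the case where $k$ is algebraically closed, treat that case by induction on $\dim G$ using the two key ingredients (\ref{b8}) and (\ref{b9}), and then descend. Over an algebraically closed field I would argue as follows. By (\ref{b5}) I may write $G$ as an extension of a pseudo-abelian variety $Q=G/N_0$ by its largest affine normal subgroup variety $N_0$ (which exists by (\ref{b3})), so the whole problem collapses to the assertion that \emph{a pseudo-abelian variety over an algebraically closed field is complete}, hence an abelian variety. Granting this, $N_0$ is the subgroup wanted: for any connected affine normal algebraic subgroup $N'$ with $G/N'$ an abelian variety, the image of $N_0$ in $G/N'$ is at once affine (a quotient of $N_0$, by (\ref{b1})) and complete (a subgroup of an abelian variety), hence trivial, so $N_0\subseteq N'$. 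Thus $N_0$ is the smallest such subgroup, in particular unique, and it is smooth by construction.

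For the crucial assertion, let $Q$ be pseudo-abelian and suppose, for a contradiction, that $Q$ is not complete; I induct on $\dim Q$. First I observe that the identity component $Z$ of the centre of $Q$ is complete: otherwise (\ref{b9}) produces an affine subgroup of $Z$ of positive dimension, which is normal in $Q$ because it is central, contradicting pseudo-abelianness. Thus $Z$ is an abelian subvariety. If $\dim Z>0$, I apply the Rosenlicht decomposition (\ref{b8}) to $Z\subseteq Q$ to obtain a normal almost-complement $N$ with $Z\times N\to Q$ faithfully flat with finite kernel; here $\dim N<\dim Q$, and $N$ cannot be complete (else $Q$, as the image of the complete group $Z\times N$, would be complete). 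By induction $N$ contains a connected affine normal subgroup variety $K$ of positive dimension; since $Z$ is central and $Q=Z\cdot N$, conjugation by $Q$ preserves $K$, so $K$ is normal in $Q$, contradicting pseudo-abelianness.

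There remains the case $\dim Z=0$, where $Q$ has no abelian subvariety of positive dimension. Here I pass to a minimal nontrivial connected normal subgroup variety $M$ of $Q$. It is not affine (pseudo-abelianness) and not complete (a complete $M$ would be a positive-dimensional abelian subvariety, central by (\ref{b7}), contradicting $\dim Z=0$). When $M\neq Q$, induction applied to $M$ yields a connected affine normal-in-$M$ subgroup $K$ of positive dimension; its conjugates $gKg^{-1}$ are again normal in $M$, their product is all of $M$ by minimality, and this product is affine by iterating (\ref{b2}), again a contradiction. The only remaining possibility is that $Q$ is its own minimal normal subgroup, i.e.\ simple; then $\dim Z=0$ forces $Q$ non-commutative (a simple commutative group variety is $\mathbb{G}_a$, $\mathbb{G}_m$, or an elliptic curve, hence affine or complete). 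This is the delicate point: as in the proof of (\ref{b9}) one compactifies $Q$ as a homogeneous space and analyses the boundary, the non-normal affine isotropy subgroups supplied by (\ref{b6h}) ultimately forcing $Q$ itself to be affine, the final contradiction.

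Finally I would descend. Over $k^{\mathrm{al}}$ the subgroup $N_0$ just constructed is unique, hence stable under all automorphisms and descent data, so it descends to a connected affine normal algebraic subgroup $N$ of $G$ with $G/N$ an abelian variety; smallness and uniqueness over $k$ follow from the same complete-and-affine-is-trivial argument. When $k$ is perfect, $N_{k^{\mathrm{al}}}$ is smooth, and smoothness is geometric, so $N$ is smooth; for smooth $N$ the formation is insensitive to base change because the largest smooth affine normal subgroup is determined geometrically and is compatible with flat base change. I expect two genuine obstacles. The first is the \emph{normality upgrade} in the algebraically closed case: (\ref{b9}) yields only a possibly non-normal affine subgroup, and converting it into a normal one --- via the minimal-normal-subgroup and simple cases above --- is the heart of the matter. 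The second is that over imperfect $k$ a pseudo-abelian variety need not be an abelian variety, so $N$ may fail to be smooth; the statement in that generality (the contribution attributed to Raynaud) requires the descent of the unique $k^{\mathrm{al}}$-subgroup rather than the naive reduction through (\ref{b5}) used in the perfect case.
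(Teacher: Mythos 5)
Your overall strategy --- reduce via (\ref{b5}) to showing that a pseudo-abelian variety over an algebraically closed field is complete --- is a legitimate reformulation, and your handling of the cases where the connected reduced centre $Z$ is positive-dimensional (complete: almost-complement via \ref{b8}; not complete: \ref{b9} applied to $Z$ gives a central, hence normal, affine subgroup) matches the paper's argument in substance. But there is a genuine gap at exactly the point you flag as ``the delicate point'': a non-commutative $Q$ with finite centre and no proper nontrivial connected normal subgroup variety. Your sketch --- compactify $Q$ and hope the boundary analysis ``ultimately forces $Q$ to be affine'' --- does not work as stated: the proof of (\ref{b9}) forces the group to be affine only in the sub-case where the isotropy group $H$ is all of $G$; in the other sub-cases it produces affine subgroups that need not be normal, which is precisely the obstruction your minimal-normal-subgroup detour was designed to overcome and cannot overcome when $Q$ is simple. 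The missing observation is Corollary \ref{b6} (equivalently Proposition \ref{b6h}): if the centre of a connected algebraic group is finite, the conjugation representation on $\mathcal{O}{}_{e}/\mathfrak{m}{}_{e}^{n+1}$ has finite kernel for large $n$, so the group is affine by (\ref{b1}a). This disposes of your finite-centre case immediately (a nontrivial affine pseudo-abelian variety is absurd) and renders the minimal-normal-subgroup and conjugate-product machinery unnecessary. Indeed the paper organizes the entire induction as a trichotomy on $Z_{\mathrm{red}}$: trivial $\Rightarrow$ $G$ affine by \ref{b6}; complete and nontrivial $\Rightarrow$ induct on the almost-complement from \ref{b8}; not complete $\Rightarrow$ apply \ref{b9} to $Z_{\mathrm{red}}$, whose affine subgroups are automatically normal in $G$, and induct on the quotient.

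A second gap: over an imperfect field you assert that the unique subgroup over $k^{\mathrm{al}}$ ``descends,'' but a purely inseparable extension carries no Galois group, so uniqueness alone yields nothing. The paper instead pulls $N^{\prime}$ back along the relative Frobenius $F\colon G^{\prime}\rightarrow G^{\prime(p)}$ and checks that the resulting ideal sheaf is generated by $p$th powers of sections, hence is defined over $k$; this Frobenius argument is where Raynaud's possibly non-smooth $N$ comes from, and you would need to supply it. Your remarks on base change for smooth $N$ are also too vague to constitute a proof, though the intended point (a proper algebraic subgroup of a smooth connected group has strictly smaller dimension, so nothing smaller can appear after extension of scalars) is the one the paper uses.
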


\begin{proof}
Let $N_{1}$ and $N_{2}$ be connected affine normal algebraic subgroups of $G$
such that $G/N_{1}$ and $G/N_{2}$ are abelian varieties. There is a closed
immersion $G/N_{1}\cap N_{2}\hookrightarrow G/N_{1}\times G/N_{2}$, and so
$G/N_{1}\cap N_{2}$ is also complete (hence an abelian variety). This shows
that, if there exists a connected affine normal algebraic subgroup $N$ of $G$
such that $G/N$ is an abelian variety, then there exists a smallest such subgroup.

Let $N$ be as in the theorem, and let $k^{\prime}$ be a field containing $k$.
Then $N_{k^{\prime}}$ is a connected affine normal algebraic subgroup of
$G_{k^{\prime}}$, and $G_{k^{\prime}}/N_{k^{\prime}}\simeq(G/N)_{k^{\prime}}$
is an abelian variety. However, $N_{k^{\prime}}$ need no longer be the
smallest such subgroup because, for example, $(N_{k^{\prime}})_{\mathrm{red}}$
may have these properties. If $N$ is smooth, then every proper algebraic
subgroup has dimension less than that of $N$, and so this problem doesn't arise.

We now assume that $k$ is algebraically closed and prove by induction on the
dimension of $G$ that it contains an affine normal subgroup variety $N$ such
that $G/N$ is an abelian variety.

Let $Z$ be the centre of $G$. If $Z_{\mathrm{red}}=1$, then the representation
of $G$ on the $k$-vector space $\mathcal{O}{}_{e}/\mathfrak{m}{}_{e}^{n+1}$
has finite kernel for $n$ sufficiently large (see \ref{b6}), which implies
that $G$ itself is affine. Therefore, we may assume that $Z_{\mathrm{red}}%
\neq1$.

If $Z_{\mathrm{red}}$ is complete, then there exists an almost-complement $N$
to $Z_{\mathrm{red}}$ (see \ref{b8}), which we may assume to be smooth. As
$\dim N<\dim G$, there exists an affine normal subgroup variety $N_{1}$ of $N$
such that $N/N_{1}$ is an abelian variety. Then $N_{1}$ is normal in $G$, and
$G/N_{1}$ is an abelian variety because the isogeny $Z_{\mathrm{red}}\times
N\twoheadrightarrow G$ induces an isogeny $Z\times N/N_{1}\twoheadrightarrow
G/N_{1}$.

If $Z_{\mathrm{red}}$ is not complete, then it contains an affine subgroup
variety $N$ of dimension $>0$ (see \ref{b9}). Now $N$ is normal in $G$, and
(by induction) $G/N$ contains an affine normal subgroup variety $N_{1}$ such
that $(G/N)/N_{1}$ is an abelian variety. The inverse image $N_{1}^{\prime}$
of $N_{1}$ in $G$ is again an affine normal subgroup variety (apply \ref{b1}),
and $G/N_{1}^{\prime}\simeq$ $(G/N)/N_{1}$ is an abelian variety.

When $k$ is algebraically closed, we have shown that $G$ contains a smallest
affine normal subgroup variety $N$ such that $G/N$ is an abelian variety. If
$G$ is defined over a perfect subfield $k_{0}$ of $k$, then the uniqueness of
$N$ implies that it is stable under the action of $\Gal(k/k_{0})$, and is
therefore defined over $k_{0}$ (by Galois descent theory). This completes the
proof of the theorem when the base field is perfect.

Now suppose that $k$ is nonperfect. We know that for some finite purely
inseparable extension $k^{\prime}$ of $k$, $G^{\prime}\overset
{\textup{{\tiny def}}}{=}G_{k^{\prime}}$ contains a connected affine normal
algebraic subgroup $N^{\prime}$ such that $G^{\prime}/N^{\prime}$ is an
abelian variety. We may suppose that $k^{\prime p}\subset k$. Consider the
Frobenius map%
\[
F\colon G^{\prime}\rightarrow G^{\prime(p)}\overset{\textup{{\tiny def}}}%
{=}G^{\prime}\otimes_{k^{\prime}}k^{\prime(1/p)}.
\]
Let $N$ be the pull-back under $F$ of the algebraic subgroup $N^{\prime(p)}$
of $G^{\prime(p)}$. If $\mathcal{I}{}^{\prime}\subset\mathcal{O}{}_{G^{\prime
}}$ is the sheaf of ideals defining $N^{\prime}$, then the sheaf of ideals
$\mathcal{I}$ defining $N$ is generated by the $p$th powers of the local
sections of $\mathcal{I}^{\prime}$. As $k^{\prime p}\subset k$, we see that
$\mathcal{I}{}$ is generated by local sections of $\mathcal{O}{}_{G}$, and,
hence, that $N$ is defined over $k$. Now $N$ is connected, normal, and affine,
and $G/N$ is an abelian variety (because $N_{k^{\prime}}\supset N^{\prime}$
and so $(G/N)_{k^{\prime}}$ is a quotient of $G_{k^{\prime}}/N^{\prime}$).
\end{proof}

\begin{corollary}
\label{b11}Every pseudo-abelian variety over a perfect field is an abelian variety.
\end{corollary}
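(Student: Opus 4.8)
The plan is to deduce this immediately from the Barsotti--Chevalley theorem (\ref{b10}) together with the definition of pseudo-abelian variety (\ref{b4}). Let $G$ be a pseudo-abelian variety over a perfect field $k$. By (\ref{b10}), there exists a (smallest) connected affine normal algebraic subgroup $N$ of $G$ such that $G/N$ is an abelian variety, and, crucially, the perfectness of $k$ guarantees that $N$ is smooth.

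The key observation is then that this $N$ is precisely an affine normal subgroup variety of $G$: it is connected, affine, normal, and, by perfectness, smooth, hence a group variety sitting inside $G$ as a normal affine subgroup. Since $G$ is pseudo-abelian, Definition (\ref{b4}) forces every such subgroup to be trivial, so $N=1$. Therefore $G\simeq G/N$ is an abelian variety, which is exactly the assertion.

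I do not anticipate any serious obstacle, since essentially all of the content is already packaged into (\ref{b10}); the argument is a one-line application of that theorem. The single point that deserves care is that the smoothness clause of (\ref{b10}) is genuinely indispensable here: without it, $N$ would be merely a connected affine normal algebraic subgroup, not necessarily a \emph{subgroup variety}, and the pseudo-abelian hypothesis, which only constrains subgroup varieties, could not be invoked. Thus the proof uses the assumption that $k$ is perfect in an essential way, in keeping with the fact that non-abelian pseudo-abelian varieties do exist over imperfect fields.
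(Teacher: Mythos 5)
Your proof is correct and is essentially the paper's own argument: apply Theorem \ref{b10}, use perfectness to conclude that $N$ is smooth and hence an affine normal subgroup variety, and then invoke the definition of pseudo-abelian to get $N=1$. Your remark that the smoothness clause is the point where perfectness enters is exactly right and matches the paper's intent.
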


\begin{proof}
If $k$ is perfect and $G$ is pseudo-abelian, then the algebraic subgroup $N$
in the theorem is trivial.
\end{proof}

\begin{corollary}
\label{b12a}Every pseudo-abelian variety is commutative.
\end{corollary}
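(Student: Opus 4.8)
The plan is to show that the derived subgroup $G^{\der}$ is a connected affine normal subgroup variety of $G$, so that it must be trivial by the definition of a pseudo-abelian variety (\ref{b4}), whence $G$ is commutative. Recall that for a connected group variety $G$ the derived subgroup $G^{\der}$ is itself a connected normal subgroup variety, characterized as the smallest normal subgroup variety with commutative quotient. Because this characterization is intrinsic, $G^{\der}$ is defined over $k$ and its formation commutes with extension of the base field; in particular $(G^{\der})_{k^{\al}}=(G_{k^{\al}})^{\der}$.

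First I would pass to the algebraic closure. Over the perfect field $k^{\al}$ the Barsotti--Chevalley theorem (\ref{b10}) produces a connected affine normal subgroup variety $N$ of $G_{k^{\al}}$ with $G_{k^{\al}}/N$ an abelian variety. Abelian varieties are commutative (\ref{b6e}), so $G_{k^{\al}}/N$ is commutative, and hence $(G_{k^{\al}})^{\der}\subseteq N$ by the minimality property of the derived subgroup. Since $N$ is affine and a closed subgroup of an affine algebraic group is affine, the group $(G_{k^{\al}})^{\der}$ is affine.

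Now I would descend. Affineness of a morphism can be checked after a faithfully flat base change, and $\Spec k^{\al}\to\Spec k$ is faithfully flat; since $(G^{\der})_{k^{\al}}=(G_{k^{\al}})^{\der}$ is affine, it follows that $G^{\der}$ is affine over $k$. Thus $G^{\der}$ is a connected affine normal subgroup variety of the pseudo-abelian variety $G$, so $G^{\der}$ is trivial, and therefore $G$ is commutative.

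The main obstacle is not a hard estimate but rather the bookkeeping surrounding the derived subgroup: one must invoke that $G^{\der}$ exists as a smooth connected normal subgroup variety, that it is defined over $k$ with formation commuting with field extension, and that affineness descends along $k^{\al}/k$. The essential subtlety of the nonperfect case --- that $G_{k^{\al}}$ need no longer be pseudo-abelian --- is sidestepped by never trying to conclude directly over $k^{\al}$: one extracts only the affineness of the geometrically defined subgroup $G^{\der}$ from the computation over $k^{\al}$, and then applies the pseudo-abelian hypothesis back over $k$, where it is available.
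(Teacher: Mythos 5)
Your proof is correct and follows essentially the same route as the paper: show the derived subgroup $G^{\der}$ is contained in the affine normal subgroup $N$ furnished by Theorem~\ref{b10}, hence is an affine connected normal subgroup variety, hence trivial. The only difference is your detour through $k^{\al}$ and descent of affineness, which is harmless but unnecessary, since Theorem~\ref{b10} as stated already provides $N$ over an arbitrary (possibly nonperfect) field $k$ --- one only needs $N$ to be affine, not smooth, for this argument.
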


\begin{proof}
By definition, the commutator subgroup $G^{\prime}$ of an algebraic group $G$
is the smallest normal algebraic subgroup such that $G/G^{\prime}$ is
commutative. It is smooth (resp. connected) if $G$ is smooth (resp. connected).

Let $G$ be a pseudo-abelian variety, and let $N$ be as in the theorem. As
$G/N$ is commutative (\ref{b6c}), $G^{\prime}\subset N$. Therefore $G^{\prime
}$ is affine. As it is smooth, connected, and normal, it is trivial.
\end{proof}

\begin{aside}
\label{b13}It is possible to replace Chow's theorem (\ref{b6g}) in the proof
of (\ref{b9}), hence (\ref{b10}), with the Nagata embedding theorem
(\cite{nagata1962i}, \cite{deligne2010}), and then deduce from (\ref{b10})
that every algebraic group $G$ is quasi-projective: let $N$ be an affine
normal algebraic subgroup of $G$ such that $G/N$ is an abelian variety; then
the map $G\rightarrow G/N$ is affine (see the proof of \ref{b1}a) and $G/N$ is
projective (see \ref{b6f}); this implies that $G$ is quasi-projective.
\end{aside}

\begin{aside}
\label{b13a}Originally Barsotti and Chevalley proved their theorem over an
algebraically closed field. Because of the uniqueness, descent theory shows
that the statement in fact holds over perfect fields. Raynaud extended it to
arbitrary fields (but then the affine algebraic subgroup need no longer be
smooth). Cf. \cite{boschLR1990}, 9.2, Theorem 1, p.243.
\end{aside}

\begin{aside}
\label{b13b}Let $G$ and $N$ be as in the statement of the theorem. Clearly the
map $G\rightarrow G/N$ is universal among maps from $G$ to an abelian variety
preserving the neutral elements. Therefore $G/N$ is the Albanese variety of
$G$. In his proof of Theorem \ref{b10}, Chevalley assumed the existence of an
Albanese variety $A$ for $G$, and proved that the kernel of the map
$G\rightarrow A$ is affine.
\end{aside}

\begin{nt}
\label{b8b} The statements (\ref{b9}) and (\ref{b10}) correspond to Lemma 1,
p.437 and Theorem 16, p.439 in Rosenlicht's paper.
\end{nt}

\section{Complements}

\subsubsection{Pseudo-abelian varieties}

The defect of Raynaud's extension of the Barsotti-Chevalley theorem to
nonperfect fields is that the affine normal algebraic subgroup $N$ need not be
smooth. We saw in (\ref{b5}) that every group variety is an extension of a
pseudo-abelian variety by a smooth connected normal affine algebraic group (in
a unique way). This result is only useful if we know something about
pseudo-abelian varieties. We saw in (\ref{b12a}) that they are commutative,
and Totaro (2013)\nocite{totaro2013} shows that every pseudo-abelian variety
$G$ is an extension of a unipotent group variety $U$ by an abelian variety
$A$,%
\[
1\rightarrow A\rightarrow G\rightarrow U\rightarrow1,
\]
in a unique way.

\subsubsection{Anti-affine groups}

Let $G$ be an algebraic group over a field $k$. Then $\mathcal{O}{}(G)$ has
the structure of a Hopf algebra, and so it arises from an affine group scheme,
which we denote $G^{\mathrm{aff}}$. There is a natural homomorphism
$\phi\colon G\rightarrow G^{\mathrm{aff}}$. Cf. \cite{rosenlicht1956}, p.432.

\begin{proposition}
\label{b15}The affine group scheme $G^{\mathrm{aff}}$ is algebraic, and the
natural map $\phi\colon G\rightarrow G^{\mathrm{aff}}$ is universal for
homomorphisms from $G$ to affine algebraic groups; in particular, it is
faithfully flat. The kernel $N$ of $\phi$ has the property that $\mathcal{O}%
{}(N)=k$.
\end{proposition}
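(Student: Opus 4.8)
The plan is to understand $G^{\mathrm{aff}}$ as the affine group scheme $\Spec(\mathcal{O}(G))$ attached to the Hopf algebra of global functions, and to prove the universal property directly from this construction, deducing algebraicity and the statement about $N$ as consequences. Since $\mathcal{O}(G)$ carries a Hopf-algebra structure (comultiplication, counit, antipode coming from the group structure on $G$), $G^{\mathrm{aff}} = \Spec \mathcal{O}(G)$ is automatically an affine group scheme and $\phi\colon G \to G^{\mathrm{aff}}$ is the canonical morphism corresponding to the inclusion $\mathcal{O}(G) = \mathcal{O}(G^{\mathrm{aff}}) \hookrightarrow \mathcal{O}(G)$ of global sections. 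The universal property is then essentially formal: any homomorphism $\psi\colon G \to H$ with $H$ affine corresponds to a Hopf-algebra map $\mathcal{O}(H) \to \mathcal{O}(G)$, and since $\mathcal{O}(H) = \mathcal{O}(G^{\mathrm{aff}})$ by construction, this factors uniquely through $\phi$. So the first step is to check that this adjunction (global-sections functor is right adjoint to $\Spec$ on Hopf algebras) respects the group structure, which it does because all the structure maps are defined via the same functor of points.

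\medskip\noindent
\textbf{The main obstacle is algebraicity}, i.e.\ showing $\mathcal{O}(G)$ is a finitely generated $k$-algebra; this is where the geometry of group varieties (as opposed to arbitrary schemes) enters. First I would reduce to the case where $G$ is a group variety by noting that the reduced and connected-component structure only changes $\mathcal{O}(G)$ in a controlled way. The key input is the Barsotti--Chevalley theorem (\ref{b10}) itself: write $1 \to N_0 \to G \to A \to 1$ with $N_0$ affine and $A$ an abelian variety. Since $A$ is complete and connected, $\mathcal{O}(A) = k$, so pulling back along $G \to A$ contributes nothing to $\mathcal{O}(G)$; intuitively all global functions on $G$ come from the affine fibre direction $N_0$. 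Making this precise --- that $\mathcal{O}(G)$ is a finitely generated quotient/subalgebra controlled by $\mathcal{O}(N_0)$ --- requires analyzing the faithfully flat map $G \to A$ and using that $\pi_*\mathcal{O}_G$ is a coherent sheaf of algebras on the noetherian scheme $A$, so its global sections form a finite module over $\mathcal{O}(A) = k$ in each graded/filtered piece. This finiteness, together with the Hopf structure, forces $\mathcal{O}(G)$ to be finitely generated, hence $G^{\mathrm{aff}}$ algebraic.

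\medskip\noindent
\textbf{Faithful flatness of $\phi$} then follows from the universal property: the image of $\phi$ is an affine algebraic subgroup through which every homomorphism to an affine group factors, and by the general theory (the quotient $G \to G/\ker\phi$ is faithfully flat onto its image, which by universality must be all of $G^{\mathrm{aff}}$) one identifies $G^{\mathrm{aff}}$ with the fppf-image, giving faithful flatness. Finally, for the kernel $N = \ker\phi$, I would argue that $\mathcal{O}(N) = k$: by faithfully flat base change $\mathcal{O}(N)$ is computed from $\mathcal{O}(G) \otimes_{\mathcal{O}(G^{\mathrm{aff}})} k$, but since $\phi$ is the universal map to an affine group, $N$ can admit no nonconstant global functions --- any such function would define a nontrivial homomorphism $N \to \mathbb{G}_a$ or $\mathbb{G}_m$, which by the universal property and normality would enlarge the affine quotient, contradicting that $G^{\mathrm{aff}}$ is the largest affine quotient. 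Concretely, $N$ is \emph{anti-affine}, so $\mathcal{O}(N) = k$ exactly says that $N$ has no nonconstant global regular functions, which is the defining property of the kernel of the universal map to an affine group.
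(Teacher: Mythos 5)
The paper offers no proof of this proposition to compare against: it simply cites Demazure and Gabriel III, \S 3, 8.2. Your overall architecture --- take $G^{\mathrm{aff}}=\Spec\mathcal{O}(G)$ with its Hopf structure, get the universal property from the adjunction between $\Spec$ and global sections, and deduce faithful flatness by identifying $G^{\mathrm{aff}}$ with the image of $\phi$ --- is the standard one and is fine as far as it goes. But both places where the real content lies have gaps. For algebraicity, your claim that $\pi_{*}\mathcal{O}_{G}$ is a \emph{coherent} sheaf of algebras on $A$ is false whenever the affine part $N_{0}$ has positive dimension: $\pi$ is an affine morphism with fibres isomorphic to $N_{0}$, so the stalks of $\pi_{*}\mathcal{O}_{G}$ are infinite-dimensional. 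The fallback ``finite in each graded/filtered piece'' does not rescue the argument, because taking global sections does not commute with generating an algebra by a coherent subsheaf; a nontrivial extension of an elliptic curve $A$ by $\mathbb{G}_{a}$ in characteristic zero has $\mathcal{O}(G)=k$ even though every fibre of $G\rightarrow A$ is $\mathbb{A}^{1}$, which shows how delicately $\Gamma(A,\pi_{*}\mathcal{O}_{G})$ depends on the extension class. The standard proof avoids the fibration entirely: by noetherianity there is a smallest normal subgroup scheme $N$ with $G/N$ affine (the family is stable under finite intersection, since $G/(H_{1}\cap H_{2})\hookrightarrow G/H_{1}\times G/H_{2}$ is a closed immersion, cf.\ the first paragraph of the proof of \ref{b10}); one then shows $\mathcal{O}(N)=k$ and that every global function on $G$ is constant on the fibres of $G\rightarrow G/N$, whence $\mathcal{O}(G)=\mathcal{O}(G/N)$ is finitely generated because $G/N$ is an affine algebraic group. (Your route also relies on Theorem \ref{b10}, which the paper proves only for group varieties, while the proposition concerns arbitrary algebraic groups.)

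The second gap is the step ``a nonconstant function on $N$ gives a nontrivial homomorphism $N\rightarrow\mathbb{G}_{a}$ or $\mathbb{G}_{m}$.'' A nonconstant regular function on a group scheme is merely a morphism to $\mathbb{A}^{1}$, not a character or additive character (think of a matrix entry on $\GL_{2}$). The correct mechanism is the regular representation: a nonconstant $f\in\mathcal{O}(N)$ lies in a finite-dimensional subcomodule $V$ for the comultiplication, giving a homomorphism $N\rightarrow\GL(V)$ with nontrivial affine image; since the kernel of the affinization of $N$ is characteristic in $N$ and hence normal in $G$, Lemma \ref{b1}(a) then produces a normal subgroup scheme strictly smaller than $N$ with affine quotient, contradicting minimality. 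With these two repairs your outline becomes essentially the Demazure--Gabriel proof that the paper points to.
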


\begin{proof}
\cite{demazureG1970} III, \S 3, 8.2 (p. 357).
\end{proof}

An algebraic group $G$ over $k$ such that $\mathcal{O}{}(G)=k$ is said to be
\emph{anti-affine}. Thus every algebraic group is an extension of an affine
algebraic group by an anti-affine algebraic group%
\[
1\rightarrow G_{\mathrm{ant}}\rightarrow G\rightarrow G^{\mathrm{aff}%
}\rightarrow1,
\]
in a unique way. It is known that every anti-affine algebraic group is smooth,
connected, and commutative; in fact, every homomorphism from an anti-affine
group to a connected algebraic group factors through the centre of the group
(ibid., 8.3). Therefore, a group variety is an extension of an affine group
variety by a central anti-affine group variety.

Over a field of nonzero characteristic, every anti-affine algebraic group is
an extension of an abelian variety by a torus, i.e., it is a semi-abelian
variety. Over a field of characteristic zero it may also be an extension of a
semi-abelian variety by a vector group. Not every such extension is
anti-affine, but those that are have been classified. See \cite{brion2009,
salas2001, salas2009}.

\subsubsection{A decomposition theorem}

\begin{theorem}
\label{b14}Let $G$ be a connected algebraic group over field $k$. Let
$G_{\mathrm{aff}}$ be the smallest connected affine normal algebraic subgroup
of $G$ such that $G/G_{\mathrm{aff}}$ is an abelian variety (see \ref{b10}),
and let $G_{\mathrm{ant}}$ be the smallest normal algebraic subgroup such that
$G/G_{\mathrm{ant}}$ is affine (see \ref{b15}). Then the multiplication map on
$G$ defines an exact sequence%
\[
1\rightarrow G_{\mathrm{aff}}\cap G_{\mathrm{ant}}\rightarrow G_{\mathrm{aff}%
}\times G_{\mathrm{ant}}\rightarrow G\rightarrow1.
\]

\end{theorem}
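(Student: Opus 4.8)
The plan is to show that the multiplication map $G_{\mathrm{aff}} \times G_{\mathrm{ant}} \to G$ is a faithfully flat homomorphism with kernel $G_{\mathrm{aff}} \cap G_{\mathrm{ant}}$. The crucial structural fact to invoke is the remark following (\ref{b15}): since $G_{\mathrm{ant}} = G_{\mathrm{ant}}$ is an anti-affine (hence connected) normal subgroup and every homomorphism from an anti-affine group to a connected algebraic group factors through the centre, $G_{\mathrm{ant}}$ is central in $G$. Centrality is what makes the multiplication map $(a,b) \mapsto ab$ a \emph{homomorphism} on the product $G_{\mathrm{aff}} \times G_{\mathrm{ant}}$, just as centrality of $A$ was used in the Rosenlicht decomposition theorem (\ref{b8}). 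With this in hand, the kernel of multiplication is visibly $\{(x,x^{-1}) : x \in G_{\mathrm{aff}} \cap G_{\mathrm{ant}}\} \simeq G_{\mathrm{aff}} \cap G_{\mathrm{ant}}$, which accounts for the left-hand term of the asserted exact sequence.

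The remaining point is surjectivity (faithful flatness) of the multiplication map, i.e.\ that $G = G_{\mathrm{aff}} \cdot G_{\mathrm{ant}}$. First I would observe that the image $G_{\mathrm{aff}} \cdot G_{\mathrm{ant}}$ is a normal algebraic subgroup of $G$ (it is the image of a homomorphism, and normality follows from normality of the two factors). To see that it is all of $G$, I would play the two universal properties against each other. The quotient $G / (G_{\mathrm{aff}} G_{\mathrm{ant}})$ is simultaneously a quotient of $G/G_{\mathrm{aff}}$, hence an abelian variety (by \ref{b1}), and a quotient of $G/G_{\mathrm{ant}}$, hence affine. But an algebraic group that is both affine and an abelian variety — being affine and complete — is trivial. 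Therefore $G/(G_{\mathrm{aff}} G_{\mathrm{ant}})$ is trivial, so $G_{\mathrm{aff}} G_{\mathrm{ant}} = G$ and the multiplication map is faithfully flat.

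Assembling these pieces yields the exact sequence
\[
1 \rightarrow G_{\mathrm{aff}} \cap G_{\mathrm{ant}} \rightarrow G_{\mathrm{aff}} \times G_{\mathrm{ant}} \rightarrow G \rightarrow 1,
\]
where the first map is $x \mapsto (x, x^{-1})$ and the second is multiplication.

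I expect the main obstacle to be establishing centrality of $G_{\mathrm{ant}}$ cleanly enough to justify that multiplication is a homomorphism; this is the one place the argument genuinely depends on the deep structural input about anti-affine groups quoted after (\ref{b15}), rather than on formal manipulation. Everything else — normality of the image, the affine-plus-complete-implies-trivial step, and the identification of the kernel — is a routine application of Lemma \ref{b1} and the Noether isomorphism formalism reviewed at the start of the paper. One subtlety worth checking is that the two defining minimality properties are being used only to name the subgroups $G_{\mathrm{aff}}$ and $G_{\mathrm{ant}}$ via \ref{b10} and \ref{b15}; the exactness itself uses only that $G/G_{\mathrm{aff}}$ is an abelian variety and $G/G_{\mathrm{ant}}$ is affine, together with centrality, so no appeal to minimality is needed in the surjectivity argument.
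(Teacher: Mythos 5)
Your proposal is correct and follows essentially the same route as the paper: centrality of $G_{\mathrm{ant}}$ (from the structural fact quoted after \ref{b15}) makes multiplication a homomorphism, and surjectivity follows because the quotient of $G$ by the image $G_{\mathrm{aff}}G_{\mathrm{ant}}$ is both affine and complete, hence trivial. The paper's proof is just a terser version of yours; your extra details (the explicit kernel $\{(x,x^{-1})\}$ and the observation that minimality is not needed for exactness) are accurate.
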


\begin{proof}
Because $G_{\mathrm{ant}}$ is contained in the centre of $G$ (see above), the
map $G_{\mathrm{aff}}\times G_{\mathrm{ant}}\rightarrow G$ is a homomorphism
of algebraic groups. It is faithfully flat because the quotient of $G$ by its
image is both affine and complete.
\end{proof}

\begin{nt}
Brion 2008, p.945, notes that (\ref{b14}) is a variant of the structure
theorems at the end of Rosenlicht's paper.
\end{nt}

\subsection{Acknowedgment}

I thank M. Brion for his comments on the first version of the article; in
particular, for pointing out a gap in the proof of the original Lemma 3.1, and
for alerting me to his forthcoming book, which gives a much more expansive
treatment of the questions examined in this note.

\bibliographystyle{cbe}
\bibliography{D:/Current/refs}

\bigskip James S. Milne,

Mathematics Department, University of Michigan, Ann Arbor, MI 48109, USA,

Email: jmilne@umich.edu

Webpage: \url{www.jmilne.org/math/}
\end{document}